\documentclass[11pt]{article}
\usepackage{geometry,graphicx,latexsym,amssymb,verbatim, multicol}
\usepackage{amsmath, amssymb, amsthm}
\usepackage{tikz}
\usepackage{epic}
\usepackage{pstricks}
\usepackage{pst-node}
\usepackage{pstricks-add}
\usepackage[numbers,sort&compress]{natbib}
\usetikzlibrary{calc}

\textwidth=6.0in \textheight=8.5in \evensidemargin=0in
\oddsidemargin=0in \topmargin=0in \topskip=0pt \baselineskip=12pt
\parskip=8pt
\parindent=1em

\newtheorem{thm}{Theorem}[section]
\newtheorem{lem}[thm]{Lemma}

\newtheorem{prob}{Problem}
\newtheorem{cor}[thm]{Corollary}
\newtheorem{obs}[thm]{Observation}

\newtheorem{defi}{Definition}

\renewcommand{\comment}[1]{}

\renewenvironment{proof}{\noindent {\it Proof.}}{$\Box$\\}
\newcommand{\dontshow}[1]{}

\newcommand{\ov}{\overline}
\newcommand{\modulo}[1]{\hspace{1ex}(\hspace{-1.5ex} \mod {#1})}

\begin{document}

\begin{center}
{\LARGE  New approach to the $k$-independence number of a graph}
\mbox{}\\[8ex]

\begin{multicols}{2}

Yair Caro\\[1ex]
{\small Dept. of Mathematics and Physics\\
University of Haifa-Oranim\\
Tivon 36006, Israel\\
yacaro@kvgeva.org.il}

\columnbreak

Adriana Hansberg\\[1ex]
{\small Dep. de Matem\`atica Aplicada III\\
UPC Barcelona\\
08034 Barcelona, Spain\\
adriana.hansberg@upc.edu}\\[2ex]

\end{multicols}

\end{center}

\begin{abstract}
Let $G = (V,E)$ be a graph and $k \ge 0$ an integer. A $k$-independent set $S \subseteq V$ is a set of vertices such that the maximum degree in the graph induced by $S$ is at most $k$. With $\alpha_k(G)$ we denote the maximum cardinality of a $k$-independent set of $G$. We prove that, for a graph $G$ on $n$ vertices and average degree $d$, $\alpha_k(G) \ge \frac{k+1}{\lceil d \rceil + k + 1} n$, improving the hitherto best general lower bound due to Caro and Tuza [Improved lower bounds on k-independence, J. Graph Theory 15 (1991), 99-107].\\

\noindent
{\small \textbf{Keywords:}} $k$-independence, average degree \\
{\small \textbf{AMS subject classification: 05C69}}
\end{abstract}


\section{Introduction}

Let $G = (V,E)$ be a graph on $n$ vertices and $k \ge 0$ an integer. A \emph{$k$-independent set} $S \subseteq V$ is a set of vertices such that the maximum degree in the graph induced by $S$ is at most $k$. With $\alpha_k(G)$ we denote the maximum cardinality of a $k$-independent set of $G$ and it is called the \emph{$k$-independence number} of $G$. In particular, $\alpha_0(G) = \alpha(G)$ is the usual independence number of $G$. The Caro-Wei bound $\alpha(G) \ge \sum_{v \in V} \frac{1}{\deg(v)+1}$  \cite{Caro, Wei} is an improvement of the well-known Tur\'an bound for the independent number $\alpha(G) \ge \frac{n}{d(G)+1}$ \cite{Tur}, where $d(G)$ is the average degree of $G$. Various results concerning possible improvements and generalizations of the Caro-Wei bound are known (see \cite{AjKoSz1, AlKaSe, AlSp, BCHN, BGHR, GHRSch, Gri, HaRa, HaSch, Mur, Sel, Shea1, Shea2}). A well known generalization to the $k$-independence number of $r$-uniform hypergraphs was obtained by Caro and Tuza \cite{CaTu} improving earlier results of Favaron \cite{Fav} and was extended to non-uniform hypergraphs by Thiele \cite{Thi}. See also the recent papers \cite{CFHV_survey, CPS} for updates. An extension of the notion of residue of a graph, notably developed by Fajtlowicz in \cite{Faj} and Favaron et al. in \cite{FaMaSa}, to the notion of $k$-residue has been developed by Jelen \cite{Jel}. There has been also much interest in using the Caro-Tuza to algorithmic aspects (see \cite{HaLa, Los, ShaSri}). Yet all these lower bounds give asymptotically $\alpha_k(G)\ge \frac{k+2}{2(d+1)}n$ for $k$ fixed and $d = d(G)$. It is easy to see that in general we cannot hope to get better than $\frac{k+1}{d+1}n$, as can be seen from the graph $G = mK_{d+1}$ for $d \ge k$ with $n = m(d+1)$. So there is still an asymptotic multiplicative gap of a factor of $2\frac{k+1}{k+2}$. It is worth to mention that there is no known modification of the charming probabilistic proof of the lower bound of Caro-Wei theorem to the situation of $k$-independence that gives a better bound than the Caro-Tuza lower bound. Here, for the sake of being self-contained and to use the same notation, we restate and give the short proof of the Caro-Tuza theorem for graphs. Then we show how to improve this result using further ideas and, in particular, we close the multiplicative gap proving, as a corollary of our main result, that $\alpha_k(G) \ge \frac{k+1}{\lceil d(G) \rceil +k +1} n$. Doing so, we solve of a "folklore" conjecture stated explicitly in \cite{BCHN}.

All along this paper, we will use the following notation and definitions. Let $G$ be a graph. By $V(G)$ we denote the set of vertices of $G$ and $n(G) = |V(G)|$ is the order of $G$. $E(G)$ stands for the set of edges of $G$ and $e(G)$ denotes its cardinality. For a vertex $v \in V(G)$, $\deg(v) = \deg_G(v)$ is the degree of $v$ in $G$. By $\Delta(G)$ we denote the maximum degree of $G$ and by $d(G)$ the average degree $\frac{1}{n(G)} \sum_{v \in V(G)} \deg(v)$. For a subset $S \subseteq V(G)$, we write $G[S]$ for the graph induced by $S$ in $G$ and $\deg_S(v)$ stands for the degree $\deg_{G[S]}(v)$ of $v$ in $G[S]$. Lastly, for a vertex $v \in V(G)$, $G-v$ represents the graph $G$ without vertex $v$ and all the edges incident to $v$.

The paper is divided into five sections. After this introduction section, we deal in section 2 with a first naive approach to obtain a lower bound on $\alpha_k(G)$ by deleting iteratively vertices of maximum degree until certain point where an old theorem of Lov\'asz \cite{Lov} is applied. In section 3, we proceed the same way, taking however a better control on the number of vertices that are deleted and we prove that, for a graph $G$ on $n$ vertices and average degree $d$, $\alpha_k(G) \ge \frac{k+1}{\lceil d \rceil + k + 1} n$, improving the hitherto best general lower bound due to Caro and Tuza. For this purpose, we define a parameter $f(k,d)$ which approaches from below the best possible ratio $\frac{\alpha(G)}{n(G)}$ for graphs $G$ with $d(G) \le d$,  we calculate the exact value of $f(1,d)$ and prove some lower bounds on $f(k,d)$. In Section 4, we develop some upper bounds on $f(k,d)$. Finally, we present in Section 5 some open problems for further research.


\section{The naive approach: first improvement}

Let $f: [0,\infty) \rightarrow \mathbb{R}$ be the function defined by
$$
f_k(x) = \begin{cases}
1-\frac{x}{2(k+1)}, & \text{if } 0 \le x \le k+1\\
\frac{k+2}{2(x+1)}, & \text{if } x \ge k+1.
\end{cases}
$$

Observe the following properties of $f_k(x)$:

\begin{enumerate}
\item[(P1)] $f_k(x)$ is a convex function and is strictly monotone decreasing on $[0, \infty)$.
\item[(P2)] $f_k(i) - f_k(i+1) \ge f_k(j) - f_k(j+1)$, for $j \ge i \ge 0$.
\item[(P3)] $i f_k(i-1) = (i+1) f_k(i)$, for $i \ge k+1$.
\item[(P4)] $f_k(0) = 1$ and $f_k(k+1) = \frac{1}{2}$.
\end{enumerate}

\begin{thm}[Caro-Tuza for Graphs, \cite{CaTu}] \label{Caro-Tuza}
Let $G$ be a graph with degree sequence $d_1, \ldots ,d_n$. Then $\alpha_k(G) \ge \sum_{i = 1}^n  f_k(d_ i)$.
\end{thm}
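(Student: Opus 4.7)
The plan is to prove the bound by induction on $n = |V(G)|$, peeling off vertices of maximum degree. The base case and the easy case coincide: if $\Delta(G) \le k$ then $V(G)$ is itself a $k$-independent set, so $\alpha_k(G) = n$, and since $f_k(d_i) \le f_k(0) = 1$ for every $i$ by (P1) and (P4), the inequality $\alpha_k(G) \ge \sum_i f_k(d_i)$ is immediate.

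The interesting case is $\Delta = \Delta(G) \ge k+1$. I would pick a vertex $v$ of degree $\Delta$, let $G' = G - v$, and apply the induction hypothesis to $G'$. The degree sequence $d'_1, \ldots, d'_{n-1}$ of $G'$ differs from that of $G$ only in that $d_u$ is replaced by $d_u - 1$ for each $u \in N(v)$ and the entry $d_v$ is dropped. Any $k$-independent set of $G'$ remains $k$-independent in $G$, so
\[
\alpha_k(G) \ge \alpha_k(G') \ge \sum_{u \ne v} f_k(d_u) + \sum_{u \in N(v)} \bigl[f_k(d_u - 1) - f_k(d_u)\bigr].
\]
The inductive step then reduces to showing that the second sum on the right is at least $f_k(d_v) = f_k(\Delta)$.

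To establish this, I would use properties (P2) and (P3) in sequence. For each $u \in N(v)$, since $d_u \le \Delta$, property (P2) applied with $i = d_u - 1$ and $j = \Delta - 1$ yields $f_k(d_u - 1) - f_k(d_u) \ge f_k(\Delta - 1) - f_k(\Delta)$. Summing over the $\Delta$ vertices in $N(v)$, it suffices to verify $\Delta\bigl[f_k(\Delta - 1) - f_k(\Delta)\bigr] \ge f_k(\Delta)$, equivalently $\Delta f_k(\Delta - 1) \ge (\Delta + 1) f_k(\Delta)$. This is precisely (P3), which in fact holds as an equality for $\Delta \ge k+1$. Hence the induction closes.

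The step I expect to require the most care is matching the right property at the right place: seeing that (P2) gives the needed comparison of discrete differences, and, more crucially, that (P3) is the exact algebraic identity that makes the peeling of a maximum-degree vertex cost-neutral. The choice of $v$ as a maximum-degree vertex is essential: it simultaneously guarantees $\Delta \ge k+1$ (so that (P3) applies) and $d_u \le \Delta$ for every $u \in N(v)$ (so that (P2) applies in the correct direction). A probabilistic random-permutation attempt in the style of Caro--Wei seems to give only the weaker bound $\min\{1, (k+1)/(d+1)\}$ per vertex without additional work, which is one reason the deterministic peeling argument is preferred here.
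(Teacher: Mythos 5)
Your proof is correct. It is, however, organized differently from the paper's. You run a direct induction on $n$: delete a vertex $v$ of maximum degree $\Delta\ge k+1$ of $G$ itself, apply the hypothesis to $G-v$, and close the induction with $\sum_{u\in N(v)}\bigl[f_k(d_u-1)-f_k(d_u)\bigr]\ge \Delta\bigl[f_k(\Delta-1)-f_k(\Delta)\bigr]=f_k(\Delta)$, which is exactly the right use of (P2) and (P3) (note that (P2) applies since $u\in N(v)$ forces $d_u\ge 1$, and (P3) applies since $\Delta\ge k+1$). The paper instead makes an extremal choice: among all subsets $X\subseteq V(G)$ maximizing the potential $s(X)=\sum_{x\in X}f_k(\deg_X(x))$ it picks one, $B$, of minimum cardinality, and shows $B$ is already $k$-independent by checking that deleting a maximum-degree vertex of $G[B]$ cannot decrease $s$ --- the same (P2)--(P3) computation you perform, but applied to degrees inside $B$ rather than in $G$. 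The two arguments are of comparable length and both are constructive; the paper's version exhibits the $k$-independent set directly as a minimal maximizer of the potential (which ties in with the local-search algorithmic literature the authors cite), while yours avoids the auxiliary optimization over all subsets and yields the perhaps more transparent ``peel off maximum-degree vertices until $\Delta\le k$'' procedure, foreshadowing the deletion strategy the paper itself adopts in Sections 2 and 3.
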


\begin{proof}
For a subset $X \subseteq V(G)$, define $s(X) = \sum_{x \in X} f_k(\deg_X(x))$. Among all subsets of $X \subseteq V(G)$ such  that $s(X)$ is maximum, choose $B$ such that $B$ has the smallest cardinality. In particular,  $|B| \ge s(B) \ge s(V(G)) = \sum_{x \in V(G)} f_k(\deg(x))$. We will show that $B$ is a $k$-independent set of G. Suppose there is a vertex $y \in B$ such that $\deg_B(y) = d \ge k +1$. Let $y$ be the vertex of maximum degree in $G[B]$. We will show that $s(B \setminus \{y\}) \ge s(B)$, a contradiction to the minimality of $|B|$. For $x \in B\setminus\{y\}$, let $z(x) = 1$ if $xy$ is an edge in $G$ and $0$ otherwise. Then
\begin{eqnarray*}
s(B \setminus \{y\}) &=& \sum_{x \in B\setminus \{y\}} f_k(\deg_{B\setminus \{y\}}(x)) =  \sum_{x \in B\setminus \{y\}} f_k(\deg_B(x) - z(x))\\
                                 &=&  \left( \sum_{x \in B} f_k(\deg_B(x) - z(x)) \right) - f_k(d)\\
                                 &=& s(B) - f_k(d) +  \sum_{x \in B} (f_k(\deg_B(x) - z(x)) - f_k(\deg_B(x)))\\
                                 &=& s(B) - f_k(d) + \sum_{x \in B} z(x) \left(f_k(\deg_B(x)-1) - f_k(\deg_B(x)) \right)\\
                                 &=& s(B) - f_k(d) + \sum_{x \in B \cap N(y)} \left(f_k(\deg_B(x) -1) - f_k(\deg(x)) \right).
\end{eqnarray*}
With (P2) we obtain that the last term is at least $s(B) - f_k(d) + d(f_k(d-1) - f_k(d)) = s(B) - (d+1) f_k(d) + d f_k(d-1)$ and, since $d f_k(d-1) = (d+1) f_k(d)$ by (P3), this is equal to $s(B)$. It follows that $s(B \setminus \{y\}) \ge s(B)$, which is a contradiction to the choice of $B$. Hence, $B$ is a $k$-independent set and thus
$$\alpha_k(G) \ge |B| \ge s(B) \ge s(V) = \sum_{x \in V(G)} f_k(\deg(x)).$$
\end{proof}

Note that, for $k = 0$, Theorem \ref{Caro-Tuza} yields the Caro-Wei bound. By convexity, the above bound yields also the following corollary.

\begin{cor}\label{coro_CT}
For a graph $G$ on $n$ vertices, $\alpha_k(G) \ge f_k(d(G)) n$.
\end{cor}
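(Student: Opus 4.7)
The plan is to deduce the corollary immediately from Theorem \ref{Caro-Tuza} using Jensen's inequality, which is applicable because of the convexity of $f_k$ recorded in property (P1).

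First, I would invoke Theorem \ref{Caro-Tuza} to write $\alpha_k(G) \ge \sum_{i=1}^n f_k(d_i)$, where $d_1, \ldots, d_n$ is the degree sequence of $G$. Next, since $f_k$ is convex on $[0,\infty)$ by (P1), Jensen's inequality gives
\[
\frac{1}{n}\sum_{i=1}^n f_k(d_i) \;\ge\; f_k\!\left(\frac{1}{n}\sum_{i=1}^n d_i\right) \;=\; f_k(d(G)).
\]
Multiplying through by $n$ and chaining with the Caro-Tuza bound yields $\alpha_k(G) \ge n\, f_k(d(G))$, which is the desired inequality.

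There is essentially no obstacle: the only nontrivial input is the convexity of the piecewise-defined $f_k$, which is already asserted in (P1) (and is easy to verify by checking that the linear piece on $[0,k+1]$ and the reciprocal-type piece on $[k+1,\infty)$ meet continuously at $x=k+1$ with the right slope inequality, both pieces being convex on their domains). Thus the corollary is a one-line consequence of Theorem \ref{Caro-Tuza} combined with Jensen. The statement is sharp in the sense that equality holds, for instance, when $G$ is regular, because then all degrees $d_i$ equal $d(G)$ and Jensen's inequality becomes an equality.
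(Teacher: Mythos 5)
Your proof is correct and is exactly the argument the paper intends: the corollary follows from Theorem \ref{Caro-Tuza} together with Jensen's inequality applied to the convex function $f_k$ of (P1). (Only your closing remark about sharpness overstates things slightly: regularity gives equality in the Jensen step, not necessarily in the overall bound $\alpha_k(G)\ge f_k(d(G))\,n$.)
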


Note that, for $k=0$, Corollary \ref{coro_CT} yields the Tur\'an bound $\alpha(G) \ge \frac{1}{d(G)+1} n$. Also, if $d(G) \ge k+1$, we obtain from this corollary the following one.

\begin{cor}
Let $G$ be a graph on $n$ vertices. If $d(G) \ge k+1$, then $\alpha_k(G) \ge \frac{k+2}{2(d(G)+1)}n$.
\end{cor}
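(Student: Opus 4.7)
The plan is to deduce the inequality directly from Corollary \ref{coro_CT}, using the case definition of $f_k$. First, I would invoke Corollary \ref{coro_CT} to obtain $\alpha_k(G) \ge f_k(d(G)) \, n$. Next, since the hypothesis $d(G) \ge k+1$ places $d(G)$ exactly in the regime covered by the second branch of the piecewise definition of $f_k$, I would specialize $f_k(d(G))$ to $\frac{k+2}{2(d(G)+1)}$ and substitute to conclude.

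There is no genuine obstacle here: the entire work is already done inside Theorem \ref{Caro-Tuza} (which yields the vertex-weighted Caro--Tuza inequality) and in the convexity step of Corollary \ref{coro_CT} (which applies Jensen's inequality to $f_k$, whose convexity is property (P1)). The only care required is to verify that $d(G)$ lies in the correct branch of the piecewise definition; this is precisely the content of the hypothesis $d(G) \ge k+1$, so the substitution is legitimate. In that sense, the corollary is best viewed as a notational clean-up of Corollary \ref{coro_CT}, restating its conclusion in a single closed form on the range of average degrees where that form governs $f_k$.
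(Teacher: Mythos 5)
Your proposal is correct and matches the paper exactly: the corollary is obtained by applying Corollary \ref{coro_CT} and substituting the second branch $f_k(x) = \frac{k+2}{2(x+1)}$, which is the one in force precisely because $d(G) \ge k+1$. Nothing further is needed.
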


For a graph $G$, we will denote with $\chi_k(G)$ the \emph{$k$-chromatic number} of $G$, i.e. the minimum number $t$ such that there is a partition $V(G) = V_1 \cup V_2 \cup \ldots V_t$ of the vertex set $V(G)$ such that $\Delta(G[V_i]) \le k$ for all $1 \le i \le t$. The following theorem is due to Lov\'asz.

\begin{thm}[Lov\'asz \cite{Lov}, 1966]
Let $G$ be a graph with maximum degree $\Delta$. If $k_1, k_2, \ldots,$ $k_t \ge 0$ are integers such that $\Delta + 1 = \sum_{i=1}^t (k_i+1)$, then there is a partition $V(G) = V_1 \cup V_2 \cup \ldots \cup V_t$ of the vertex set of $G$ such that $\Delta(G[V_i]) \le k_i$ for $1 \le i \le t$.
\end{thm}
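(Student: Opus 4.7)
The plan is a local-exchange argument on a well-chosen potential. To each ordered partition $(V_1, \ldots, V_t)$ of $V(G)$ I would associate the weight
\[
W(V_1,\ldots,V_t) \;=\; \sum_{i=1}^t \frac{e(G[V_i])}{k_i+1},
\]
and pick a partition that minimizes $W$ over all such ordered $t$-partitions (at least one exists since the set of partitions is finite and nonempty). I would then claim that this optimal partition already satisfies $\Delta(G[V_i]) \le k_i$ for every $i$, which gives the theorem.

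To verify the claim, I would suppose, toward a contradiction, that some class $V_i$ contains a vertex $v$ with $d_i := \deg_{V_i}(v) \ge k_i+1$. Writing $d_j := |N(v) \cap V_j|$ for each $j \in \{1,\ldots,t\}$, note that moving $v$ from $V_i$ to $V_j$ (for any $j \neq i$) produces a new partition whose weight differs from $W$ by
\[
\Delta W \;=\; \frac{d_j}{k_j+1} - \frac{d_i}{k_i+1}.
\]
By the minimality of $W$, we must have $\Delta W \ge 0$, i.e.\ $\frac{d_j}{k_j+1} \ge \frac{d_i}{k_i+1} \ge 1$, so $d_j \ge k_j+1$ for every $j\neq i$ (and also $d_i \ge k_i+1$ by assumption). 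Summing the $d_j$ then yields
\[
\deg(v) \;=\; \sum_{j=1}^t d_j \;\ge\; \sum_{j=1}^t (k_j+1) \;=\; \Delta+1,
\]
contradicting $\deg(v)\le\Delta$. Hence every $v\in V_i$ satisfies $\deg_{V_i}(v)\le k_i$, as required.

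The only real ingenuity lies in the choice of the potential $W$: the weight $\frac{1}{k_j+1}$ attached to each part is precisely what allows the inequality $\frac{d_j}{k_j+1}\ge 1$ (forced by minimality once $d_i\ge k_i+1$) to be aggregated into the hypothesis $\sum_j(k_j+1)=\Delta+1$. After that, the verification is a one-line local move and the rest is routine arithmetic. No induction on $|V(G)|$ or coloring structure is needed, which is what makes this proof so compact.
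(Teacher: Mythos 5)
Your argument is correct and complete: the weight change under moving $v$ from $V_i$ to $V_j$ is exactly $\frac{d_j}{k_j+1}-\frac{d_i}{k_i+1}$ (only edges incident to $v$ and lying inside $V_i$ or $V_j$ are affected), minimality forces $d_j\ge k_j+1$ for all $j$, and summing contradicts $\deg(v)\le\Delta=\sum_j(k_j+1)-1$. Note that the paper itself does not prove this theorem --- it only cites Lov\'asz (1966) and later references --- but what you have written is essentially the classical local-exchange proof of that result, with the weights $\frac{1}{k_i+1}$ chosen so that the exchange inequalities aggregate against the hypothesis $\Delta+1=\sum_i(k_i+1)$; no gaps.
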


Several proofs and generalizations of Lov\'asz's theorem are known. We refer the reader to \cite{BoMa, BoKoTo, Cat1, Cat2, Rab}. An algorithmic analysis of Lov\'asz theorem with running time $O(n^3)$ is given in \cite{HaLa}. An immediate and well known corollary of Lov\'asz's theorem is Corollary \ref{cor_Lov}, which is useful in the study of defective colorings also known as improper colorings (see \cite{BHHL, CCW, FrHe, HKS}).

\begin{cor}\label{cor_Lov}
If $G$ is a graph of maximum degree $\Delta$, then $\chi_k(G) \le \lceil \frac{\Delta+1}{k+1} \rceil$.
\end{cor}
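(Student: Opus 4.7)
The plan is to apply Lov\'asz's theorem directly, choosing the parameters $k_i$ so that the resulting partition gives the desired $k$-coloring. Set $t = \lceil \frac{\Delta+1}{k+1} \rceil$, which is exactly the claimed upper bound on $\chi_k(G)$. We would like $t$ values $k_1, \ldots, k_t$, each satisfying $0 \le k_i \le k$, with $\sum_{i=1}^t (k_i+1) = \Delta+1$; if such a choice exists, Lov\'asz's theorem yields a partition $V(G) = V_1 \cup \cdots \cup V_t$ with $\Delta(G[V_i]) \le k_i \le k$ for every $i$, and hence $\chi_k(G) \le t$.

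The natural choice is $k_1 = k_2 = \cdots = k_{t-1} = k$ and then to take $k_t = \Delta - (t-1)(k+1)$, forced by the sum constraint. The step that needs verification is $0 \le k_t \le k$. For the lower bound, the ceiling definition of $t$ gives $(t-1)(k+1) < \Delta+1$, so $k_t \ge 0$. For the upper bound, $t(k+1) \ge \Delta+1$ gives $k_t + 1 = \Delta + 1 - (t-1)(k+1) \le k+1$, so $k_t \le k$.

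This is essentially a one-step bookkeeping argument on top of Lov\'asz's theorem, so I do not expect any real obstacle; the only subtlety is being a bit careful with the strict versus non-strict inequalities coming from the ceiling, which is why separating the last block $V_t$ (with possibly smaller allowed maximum degree $k_t$) is convenient.
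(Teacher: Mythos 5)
Your argument is correct and is exactly the standard derivation the paper has in mind (the paper omits the proof, calling the corollary an immediate consequence of Lov\'asz's theorem): taking $t-1$ blocks with $k_i=k$ and one last block with $k_t=\Delta-(t-1)(k+1)$, and your ceiling estimates $0\le k_t\le k$ check out. Nothing further is needed.
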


Since $\alpha_k(G) \ge \frac{n}{\chi_k(G)}$, the following bound proved in 1986 by Hopkins and Staton follows trivially from the above corollary.

\begin{thm}[Hopkins, Staton \cite{HS} 1986] \label{thm_HS}
Let $G$ be a graph of order $n$ and maximum degree $\Delta$.Then
$$\alpha_k(G)\ge\frac{n}{\left\lceil\frac{\Delta+1}{k+1}\right\rceil}.$$
\end{thm}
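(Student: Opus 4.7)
The plan is to deduce the inequality directly from Corollary \ref{cor_Lov} by a pigeonhole argument, using the relation $\alpha_k(G) \ge n/\chi_k(G)$ flagged just above the theorem statement.

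First I would apply Corollary \ref{cor_Lov} to obtain a partition $V(G) = V_1 \cup V_2 \cup \ldots \cup V_t$ with $t \le \lceil (\Delta+1)/(k+1) \rceil$ and $\Delta(G[V_i]) \le k$ for every $i$. Then by pigeonhole some class $V_j$ has size at least $n/t \ge n/\lceil (\Delta+1)/(k+1)\rceil$. Since $\Delta(G[V_j]) \le k$, the set $V_j$ is by definition a $k$-independent set of $G$; hence $\alpha_k(G) \ge |V_j|$, which gives the claimed bound.

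There is essentially no technical obstacle here: all the combinatorial content has been absorbed into Lov\'asz's partition theorem, and the step from $\chi_k(G) \le \lceil (\Delta+1)/(k+1)\rceil$ to the desired lower bound on $\alpha_k(G)$ is a one-line pigeonhole. If one wanted to avoid the intermediate use of $\chi_k$ altogether, one could simply phrase it as: the partition guaranteed by Corollary \ref{cor_Lov} has a largest class of size at least $n/\lceil (\Delta+1)/(k+1)\rceil$, and that class is itself $k$-independent.
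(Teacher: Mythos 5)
Your argument is correct and is exactly the paper's route: the paper derives Theorem \ref{thm_HS} from Corollary \ref{cor_Lov} via the observation $\alpha_k(G) \ge n/\chi_k(G)$, which is precisely your pigeonhole step on the Lov\'asz partition. Nothing is missing.
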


The following theorem is a direct consequence of Theorem \ref{thm_HS} which generalizes and improves several results concerning relations between $\alpha_p(G)$ and $\alpha_q(G)$ (see e.g. \cite{BCFM}).

\begin{thm}
Let $G$ be a graph and $q \ge p \ge 0$ two integers. Then $\alpha_q(G) \le \left\lceil \frac{q+1}{p+1} \right\rceil \alpha_p(G)$.
\end{thm}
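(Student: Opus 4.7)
The plan is to combine a maximum $q$-independent set of $G$ with the Hopkins--Staton bound (Theorem \ref{thm_HS}) applied to the subgraph induced by that set. The structural point is that a $q$-independent set automatically becomes a graph of maximum degree at most $q$ when we look at it as an induced subgraph, so bounds on $\alpha_p$ in terms of maximum degree are immediately available.

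Concretely, I would first choose a maximum $q$-independent set $S\subseteq V(G)$, so $|S|=\alpha_q(G)$ and, by definition of $q$-independence, $\Delta(G[S])\le q$. Next, I would apply Theorem \ref{thm_HS} to the graph $H=G[S]$ with parameter $p$: since $\Delta(H)\le q$, this yields
\[
\alpha_p(H)\;\ge\;\frac{|S|}{\left\lceil\frac{\Delta(H)+1}{p+1}\right\rceil}\;\ge\;\frac{\alpha_q(G)}{\left\lceil\frac{q+1}{p+1}\right\rceil},
\]
where in the last step I use that $\lceil(x+1)/(p+1)\rceil$ is monotone in $x$. Finally, I would observe that any $p$-independent set $T\subseteq S$ in $H$ is also a $p$-independent set in $G$, because $G[T]=H[T]$ and so $\Delta(G[T])=\Delta(H[T])\le p$. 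Therefore $\alpha_p(G)\ge \alpha_p(H)$, and chaining the two inequalities gives exactly $\alpha_q(G)\le \lceil (q+1)/(p+1)\rceil\,\alpha_p(G)$.

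There is essentially no obstacle here: the only point that needs a moment of care is the monotonicity step (if $\Delta(G[S])$ is strictly less than $q$, the ceiling can only be smaller, so the bound still works in our direction), and the trivial but crucial remark that induced subgraphs of $S$ inherit degree bounds from $G[S]$, so $p$-independence in $G[S]$ is the same as $p$-independence in $G$ when restricted to $S$. The whole argument is thus a two-line reduction from Theorem \ref{thm_HS}, which is why it is stated as a "direct consequence."
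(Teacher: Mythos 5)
Your proposal is correct and coincides with the paper's own argument: both take a maximum $q$-independent set $S$, apply the Hopkins--Staton bound (Theorem \ref{thm_HS}) with parameter $p$ to $G[S]$ (whose maximum degree is at most $q$), and conclude via $\alpha_p(G)\ge\alpha_p(G[S])$. The extra remarks you make about monotonicity of the ceiling and inheritance of degree bounds are exactly the (implicit) justifications the paper relies on.
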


\begin{proof}
Let $S$ be a maximum $q$-independent set of $G$. Then $\Delta(G[S]) \le q$ and, by Theorem \ref{thm_HS},
$$\alpha_p(G) \ge \alpha_p(G[S]) \ge \frac{|S|}{\left\lceil \frac{\Delta(G[S])+1}{p+1} \right\rceil} \ge \frac{\alpha_q(G)}{\left\lceil \frac{q+1}{p+1} \right\rceil},$$
which implies the statement.
\end{proof}

Completing $\Delta + 1$ to the next multiple of $k+1$, the following observation is straightforward from Theorem \ref{thm_HS}.

\begin{obs}\label{obs_HS}
Let $G$ be a graph of order $n$ and maximum degree $\Delta$ and let $r$ be an integer such that $0 \le r \le k$ and $\Delta+1+r\equiv 0 \modulo{k+1}$. Then
$$\alpha_k(G)\ge \frac{k+1}{\Delta+r+1}n.$$
\end{obs}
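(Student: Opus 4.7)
The plan is to derive Observation \ref{obs_HS} directly from Theorem \ref{thm_HS} by identifying the quantity $\lceil (\Delta+1)/(k+1) \rceil$ with $(\Delta+1+r)/(k+1)$ for the prescribed value of $r$. The content of the statement is purely that the ceiling in Theorem \ref{thm_HS} can be written as an honest fraction once one pads $\Delta+1$ up to the nearest multiple of $k+1$.

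The first step is to observe that, by the congruence hypothesis $\Delta + 1 + r \equiv 0 \pmod{k+1}$, there is an integer $m$ with $\Delta + 1 + r = (k+1)m$, so that $m = (\Delta+1+r)/(k+1)$. The second step is to pin down this $m$: since $0 \le r \le k$, one has
$$\frac{\Delta+1}{k+1} \;\le\; m \;=\; \frac{\Delta+1+r}{k+1} \;\le\; \frac{\Delta+1}{k+1} + \frac{k}{k+1} \;<\; \frac{\Delta+1}{k+1} + 1,$$
and because $m$ is an integer, this forces $m = \lceil (\Delta+1)/(k+1) \rceil$.

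The third and final step is to plug this identification into Theorem \ref{thm_HS}, which gives
$$\alpha_k(G) \;\ge\; \frac{n}{\left\lceil \frac{\Delta+1}{k+1} \right\rceil} \;=\; \frac{n}{m} \;=\; \frac{(k+1)\,n}{\Delta+1+r},$$
exactly as required. There is no real obstacle here; the only thing to keep straight is that the range $0 \le r \le k$ is precisely what is needed so that the padded value $\Delta + 1 + r$ lands in the half-open interval between consecutive multiples of $k+1$ and thus computes the ceiling correctly.
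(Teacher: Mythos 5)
Your proposal is correct and is exactly the paper's argument: the paper's proof consists of the single remark that $\lceil\frac{\Delta+1}{k+1}\rceil = \frac{\Delta+r+1}{k+1}$ followed by an application of Theorem \ref{thm_HS}, and you simply spell out the justification of that identity (which the paper dismisses as "clearly"). No differences in approach.
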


\begin{proof}
As clearly $\lceil\frac{\Delta+1}{k+1}\rceil = \frac{\Delta+r+1}{k+1}$, Theorem \ref{thm_HS} implies then $\alpha_k(G)\ge \frac{k+1}{\Delta+r+1}n.$
\end{proof}

When the graph is $d$-regular, we can set $\Delta = d = d(G)$ in Observation \ref{obs_HS} and we obtain the following one.

\begin{obs}
Let $G$ be a $d$-regular graph on $n$ vertices and let $r$ be an integer such that $0 \le r \le k$ and $d+1+r\equiv 0 \modulo{k+1}$. Then $\alpha_k(G)\ge \frac{k+1}{d+r+1}n.$
\end{obs}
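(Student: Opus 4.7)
The plan is to observe that this is an immediate specialization of Observation \ref{obs_HS}. In a $d$-regular graph on $n$ vertices, every vertex has degree exactly $d$, so in particular the maximum degree satisfies $\Delta(G) = d$. Under the hypothesis $0 \le r \le k$ and $d + 1 + r \equiv 0 \pmod{k+1}$, Observation \ref{obs_HS} applied with $\Delta = d$ yields directly
\[
\alpha_k(G) \ge \frac{k+1}{\Delta + r + 1} n = \frac{k+1}{d+r+1} n.
\]

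Since there is no additional content beyond substituting $\Delta = d$, the argument requires no further structural ingredient and there is no real obstacle. The only thing to remark is that in the regular case the average degree $d(G)$ and the maximum degree coincide, so the bound $\frac{k+1}{d+r+1} n$ is expressed naturally in terms of the common degree $d$.
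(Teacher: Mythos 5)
Your proposal is correct and is exactly what the paper does: the paper introduces this observation with the remark that for a $d$-regular graph one can set $\Delta = d = d(G)$ in Observation \ref{obs_HS}, which is precisely your substitution. Nothing further is needed.
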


So this observation shows that indeed, for $d$-regular graphs, we can close the multiplicative gap of $2\frac{k+1}{k+2}$ using Lov\'asz's theorem. This serves as an inspiration to trying to close the multiplicative gap in general.

Note that, in practice, the Hopkins-Staton bound can be poor if the maximum degree is far from the average degree.
So, our first naive strategy will be to delete a vertex with large degree and, if possible, use induction on the number of vertices. Otherwise, if $\Delta(G)$ is near to the average degree $d(G)$, we will apply Theorem \ref{thm_HS}. This is precisely what is done in the next result.

\begin{thm}\label{1st_approach}
Let $G$ be a graph on $n$ vertices. Then $\alpha_k(G) >  \frac{k+1}{d(G) +2k +2}n$.
\end{thm}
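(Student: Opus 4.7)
The plan is to proceed by induction on $n$, using the dichotomy suggested in the preceding paragraph: either the maximum degree $\Delta = \Delta(G)$ is close enough to the average degree $d = d(G)$ that Hopkins--Staton (via Observation \ref{obs_HS}) already gives what we want, or $\Delta$ is much bigger than $d$, in which case deleting a vertex of maximum degree decreases the average degree enough that the inductive hypothesis on $G-v$ implies the bound on $G$. The base case $n=1$ is trivial since then $d=0$ and $\alpha_k(G)=1>\frac{1}{2}=\frac{k+1}{2k+2}$.

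For the inductive step, I would split into two cases according to whether $\Delta<d+k+1$ or $\Delta\ge d+k+1$. In the first case, Observation \ref{obs_HS} provides an integer $r$ with $0\le r\le k$ and $\Delta+r+1\equiv 0\pmod{k+1}$ such that
$$\alpha_k(G)\ge\frac{(k+1)n}{\Delta+r+1}\ge\frac{(k+1)n}{\Delta+k+1}>\frac{(k+1)n}{d+2k+2},$$
where the last strict inequality uses precisely the case assumption $\Delta+k+1<d+2k+2$. So this case is essentially free.

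In the second case, pick a vertex $v$ with $\deg(v)=\Delta$ and let $G'=G-v$. Then $n(G')=n-1$ and a direct computation gives $d(G')=(dn-2\Delta)/(n-1)$. By induction,
$$\alpha_k(G)\ge\alpha_k(G')>\frac{(k+1)(n-1)}{d(G')+2k+2},$$
so it suffices to verify the purely algebraic inequality
$$\frac{n-1}{d(G')+2k+2}\ge\frac{n}{d+2k+2}.$$
Clearing denominators and substituting the value of $d(G')$, this reduces to $2(\Delta-d)n\ge(2k+2)(n-1)-d$, which is immediate from $\Delta-d\ge k+1$ since $(2k+2)n\ge(2k+2)(n-1)\ge(2k+2)(n-1)-d$.

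The main (and essentially only) obstacle is this last algebraic step: one must verify by hand that the threshold $d+k+1$ chosen to split the two cases is tight enough that Case 1 directly beats $d+2k+2$ in the denominator and at the same time loose enough that the drop in average degree in $G'$ compensates for the loss of one vertex. Once the two thresholds are seen to match, the proof is a short induction with no further subtleties.
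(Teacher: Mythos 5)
Your proof is correct and follows essentially the same route as the paper: induction on $n$, with a Hopkins--Staton/Lov\'asz case when $\Delta$ is close to $d$ and deletion of a maximum-degree vertex otherwise (your case split $\Delta < d+k+1$ coincides with the paper's $\Delta \le \lceil d\rceil + k$ since $\Delta$ is an integer). The algebra in Case 2, reduced to $2n(\Delta-d)\ge(2k+2)(n-1)-d$, checks out, and your single base case $n=1$ suffices because Case 2 cannot occur when $n=2$.
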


\begin{proof}
We will proceed by induction on $n$. If $n=1$, the statement is trivial. If $n=2$, $G$ is either $K_2$ or $\ov{K_2}$. If $G=K_2$, then $d(G) = 1$ and $\frac{k+1}{d(G) +2k +2}n = \frac{2(k+1)}{3+2k} < 1 \le \alpha_k(G)$ for any $k \ge 0$. If $G = \ov{K_2}$, then $d(G) = 0$ and thus $\frac{k+1}{d(G) +2k +2}n = 1 < 2 = \alpha_k(G)$ for all $k \ge 0$. Suppose now that $n \ge 3$ and that the statement holds for $n-1$. Let $G$ be a graph on $n$ vertices and $v \in V(G)$ a vertex of maximum degree $\Delta$. Define $G^* = G - v$. Since any $k$-independent set of $G^*$ is also a $k$-independent set of $G$, $\alpha_k(G) \ge \alpha_k(G^*)$. We distinguish two cases.\\
\emph{Case 1. Suppose that $\Delta \le \lceil d(G) \rceil +k$.} Then, by Observation \ref{obs_HS}, we have
$$\alpha_k(G) \ge \frac{k+1}{\Delta+k+1} n \ge \frac{k+1}{\lceil d(G) \rceil + 2k +1}n > \frac{k+1}{d(G) + 2k + 2}n$$
and we are done.\\
\emph{Case 2. Suppose that $\Delta \ge \lceil d(G) \rceil +k+1$.} By induction and with $\Delta \ge \lceil d(G) \rceil + k+1 \ge d(G)+k+1$, we obtain 
\begin{eqnarray*}
\alpha_k(G)\; \ge \; \alpha_k(G^*) &>& \frac{(k+1)(n-1)}{d(G^*) + 2k+2}  = \frac{(k+1)(n-1)}{\frac{2 e(G^*)}{n-1}+2k+2} \\
                       &=& \frac{(k+1)(n-1)}{\frac{2 e(G) - 2 \Delta}{n-1}+2k+2} = \frac{(k+1)(n-1)}{\frac{d(G) n - 2 \Delta}{n-1}+2k+2}\\
&\ge&  \frac{(k+1)(n-1)}{\frac{d(G) n - 2 (d(G)+k+1)}{n-1}+2k+2} 
                       = \frac{(k+1)n}{(d(G)+2 k+2) \frac{(n-2)n}{(n-1)^2}}\\
                       & >& \frac{k+1}{ d(G)+2k+2}n
\end{eqnarray*}
and the statement follows.
\end{proof}

Note that the bound in previous theorem is better than the Caro-Tuza bound for $k=1$ and $d \ge 8$ and for $k \ge 2$ and $d \ge 2k+5$. Note also that Theorem \ref{1st_approach} already closes the multiplicative factor of $2\frac{k+1}{k+2}$ for fixed $k$ as $d(G)$ grows. However, to obtain an even better lower bound, we need to get more control on the number of vertices of large degrees that are deleted and to apply Observation \ref{obs_HS} in its full accuracy. This will be done in the next section. 

We close this section with the following algorithm for obtaining a $k$-independent set of cardinality at least $\frac{k+1}{d(G) +2k +2}n$ for any graph $G$ on $n$ vertices that yields us the proof of Theorem \ref{1st_approach}.

\noindent
{\bf Algorithm 1}\\[1ex]
INPUT: a graph $G$ on $n$ vertices and $m$ edges.\\[-5ex]
\begin{itemize}
\item[(1)] Compute $\Delta(G)$ and $d(G)$. GO TO (2).
\item[(2)] If $\Delta(G) \le \lceil d(G) \rceil + k$, perform a Lov\'asz partition into $k$-independent sets, choose the largest class $S$ and END. Otherwise choose a vertex $v$ be of maximum degree $\Delta(G)$, set $G:= G - v$ and GO TO (1).\\[-5ex]
\end{itemize}  
OUTPUT: $S$

The algorithm terminates as, at some step, $\Delta(G) \le \lceil d(G) \rceil + k$ must hold (the latest when $G$ is the empty graph). As already mentioned, the Lov\'asz partition requires a running time of $O(n^3)$, while each other step takes at most $O(n)$ time and the number of iteration steps before performing Lov\'asz partition is at most $n$. Hence, the algorithm runs in at most $O(n^3)$ time.


\section{Deletions, partitions and a better lower bound on $\alpha_k(G)$ - second improvement}

\begin{defi}
Let $d, k \ge 0$ be two integers. We define $$f(k,d) = \inf \left\{ \frac{\alpha_k(G)}{n(G)} : G \mbox{ is a graph with } d(G) \le d\right\}.$$
\end{defi}

\begin{obs}
Let $d, k \ge 0$ be two integers. For every graph $G$ on $n$ vertices and average degree $d(G) \le d$, $\alpha_k(G) \ge f(k,d) n$.
\end{obs}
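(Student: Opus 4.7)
The plan is essentially to unwind the definition of $f(k,d)$ as an infimum. By definition,
$$f(k,d) = \inf \left\{ \frac{\alpha_k(H)}{n(H)} : H \mbox{ is a graph with } d(H) \le d\right\},$$
so every element of this set is bounded below by $f(k,d)$. Given the graph $G$ in the statement, which by hypothesis satisfies $d(G) \le d$, the ratio $\alpha_k(G)/n(G)$ lies in this set, and hence $\alpha_k(G)/n(G) \ge f(k,d)$. Multiplying through by $n = n(G)$ yields $\alpha_k(G) \ge f(k,d) n$, as claimed.

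The only point that might give pause is whether one has to worry about the infimum being attained; but the defining property of an infimum is precisely that it is a lower bound for the set, so no attainment or compactness argument is required. In other words, there is no real obstacle here: the statement is a direct restatement of the definition, recorded for later convenience so that in subsequent arguments one can pass from the hypothesis ``$d(G) \le d$'' to the conclusion ``$\alpha_k(G) \ge f(k,d)\, n(G)$'' without rewriting the infimum each time. The substantive work of the section will lie not in this observation but in obtaining useful lower bounds on the quantity $f(k,d)$ itself.
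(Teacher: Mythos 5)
Your argument is correct and is exactly the intended one: the paper states this observation without proof precisely because it is the immediate unwinding of the definition of $f(k,d)$ as an infimum, namely that the infimum is a lower bound for the set containing $\frac{\alpha_k(G)}{n(G)}$. Nothing further is needed.
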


The next theorem shows that $f(k,d)$ is convex as a function of $d$. 

\begin{thm}
Let $d, k, t \ge 0$ be integers and $t \le d$. Then $2f(k,d) \le f(k,d-t) + f(k,d+t)$.
\end{thm}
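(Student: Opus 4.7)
The plan is to prove convexity by a standard ``probabilistic/combinatorial averaging'' argument via disjoint unions. The key idea is that both the vertex count and the $k$-independence number are additive over disjoint components, while the average degree of a disjoint union is a weighted average of the components' average degrees. So, if I can combine a near-extremal graph for $f(k,d-t)$ with a near-extremal graph for $f(k,d+t)$ in the right proportion, I can engineer a graph whose average degree is at most $d$ and whose ratio $\alpha_k/n$ equals the arithmetic mean of the two ratios. The definition of $f(k,d)$ as an infimum will then yield the desired inequality.

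Concretely, I would fix $\varepsilon>0$ and use the definition of infimum to pick graphs $G_1,G_2$ with $d(G_1)\le d-t$, $d(G_2)\le d+t$, and $\alpha_k(G_i)/n(G_i) < f(k,d\mp t)+\varepsilon$. Setting $n_i=n(G_i)$, I would then let $G$ be the disjoint union of $n_2$ copies of $G_1$ together with $n_1$ copies of $G_2$. This choice balances the two pieces: each half of $G$ contributes exactly $n_1n_2$ vertices, so $G$ has $2n_1n_2$ vertices, and
$$d(G)=\frac{d(G_1)+d(G_2)}{2}\le\frac{(d-t)+(d+t)}{2}=d.$$
Since $\alpha_k$ adds over connected components (indeed over any disjoint union),
$$\frac{\alpha_k(G)}{n(G)}=\frac{n_2\,\alpha_k(G_1)+n_1\,\alpha_k(G_2)}{2n_1n_2}=\frac{1}{2}\!\left(\frac{\alpha_k(G_1)}{n_1}+\frac{\alpha_k(G_2)}{n_2}\right).$$

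Applying the observation preceding the theorem, $\alpha_k(G)/n(G)\ge f(k,d)$, so combining the two displayed facts gives $f(k,d)\le \tfrac12(f(k,d-t)+f(k,d+t))+\varepsilon$, and letting $\varepsilon\to 0$ finishes the job. There is no real obstacle here; the only point to watch is verifying that the two multiplicities $n_2$ and $n_1$ make the average degree exactly the unweighted mean of $d(G_1)$ and $d(G_2)$ (so the inequality $t\le d$ is used implicitly only to guarantee that $d-t\ge 0$ so $f(k,d-t)$ is defined). A minor variant would be to first observe the identity $\alpha_k(G_1\cup G_2)=\alpha_k(G_1)+\alpha_k(G_2)$ as a lemma and then spell out the multiplicities; I would incorporate this inline rather than break it out.
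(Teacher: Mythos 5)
Your proposal is correct and is essentially the paper's own argument: both form the disjoint union of $n_2$ copies of $G_1$ with $n_1$ copies of $G_2$, check that the average degree is the unweighted mean $\tfrac12(d(G_1)+d(G_2))\le d$, and use additivity of $\alpha_k$ over components before passing to the infimum. Your explicit $\varepsilon$-bookkeeping is a slightly more careful phrasing of the paper's ``since $G_1$ and $G_2$ were arbitrary'' step, but the substance is identical.
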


\begin{proof}
We will show that for any two graphs $G_1$ and $G_2$ such that $d(G_1) \le d-t$ and $d(G_2) \le d+t$, there is a graph $G$ with $d(G) \le d$ such that $2 \frac{\alpha_k(G)}{n(G)} \le \frac{\alpha_k(G_1)}{n(G_1)} + \frac{\alpha_k(G_2)}{n(G_2)}$. Let $G_1$ and $G_2$ be such graphs and let $n(G_i) = n_i$ and $V(G_i) = V_i$, $i = 1,2$. Define the graph $G = n_2 G_1 \cup n_1 G_2$. Then
\begin{eqnarray*}
2n_1n_2 d(G) = n(G) d(G) &=& n_2 \sum_{x \in V_1} \deg_{G_1}(x) + n_1 \sum_{y \in V_2} \deg_{H_2}(y)\\
                                                &=& n_2 n_1 d(H_1) + n_1 n_2 d(G_2) \\
                                                &\le& n_2 n_1 (d-t) + n_1 n_2 (d+t) = 2 n_1 n_2 d,
\end{eqnarray*}
implying that $d(G) \le d$ and thus $f(k,d) \le \frac{\alpha_k(G)}{n(G)}$. Moreover,
$$
2f(k,d) \le 2 \frac{\alpha_k(G)}{n(G)} = \frac{n_2 \alpha_k(G_1) + n_1 \alpha_k(G_2)}{n_1n_2}
= \frac{\alpha_k(G_1)}{n_1} + \frac{\alpha_k(G_2)}{n_1}.
$$
As $G_1$ and $G_2$ were arbitrarily chosen, it follows that $2f(k,d) \le f(k,d-t) + f(k,d+t)$.
\end{proof}

Before coming to the main theorems of this section, we need the following lemmas.

\begin{lem}\label{lem_subg}
Let $d, t \ge 0$ be two integers and let $G$ be a graph on $n$ vertices with average degree $d(G) \le d$. Then $G$ has a subgraph $H$ such that either  $\Delta(H) \le d+t-1$ and $n(H) \ge n - \lfloor \frac{n}{d+2t+1}\rfloor$ or $d(H) \le d-1$ and $n(H) = n - \lceil \frac{n}{d+2t+1} \rceil$.
\end{lem}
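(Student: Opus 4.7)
The plan is to apply a greedy deletion process: iteratively remove a vertex of maximum degree as long as the current maximum degree is at least $d+t$, and analyze where this process terminates. Set $s = \lceil n/(d+2t+1) \rceil$ and define a sequence $G = G_0, G_1, G_2, \ldots$ where, if $\Delta(G_i) \ge d+t$, we pick a vertex $v_i$ of maximum degree in $G_i$ and set $G_{i+1} = G_i - v_i$; otherwise the process stops. Two cases arise, according to whether the process terminates before completing $s$ deletions or not, and these will give the two alternatives in the statement.

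First, if the process terminates at some step $i^* \le s-1$, take $H = G_{i^*}$. Then $\Delta(H) \le d+t-1$ by the stopping rule, and $n(H) = n - i^*$. Since $s - 1 = \lceil n/(d+2t+1)\rceil - 1 \le \lfloor n/(d+2t+1) \rfloor$ (with equality in the non-divisible case and strict inequality otherwise), we have $n(H) \ge n - \lfloor n/(d+2t+1)\rfloor$, which is the first alternative.

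Otherwise, the process performs at least $s$ deletions, and each $v_i$ for $0 \le i \le s-1$ has degree $\ge d+t$ at the moment it is removed. Set $H = G_s$, so $n(H) = n - s$ and $e(H) \le e(G) - s(d+t) \le \tfrac{nd}{2} - s(d+t)$. Hence
$$d(H) \;\le\; \frac{nd - 2s(d+t)}{n-s},$$
and the inequality $d(H) \le d-1$ reduces, after clearing denominators, to $n \le s(d+2t+1)$, which holds by the very definition of $s$. This gives the second alternative, with $n(H) = n - \lceil n/(d+2t+1) \rceil$ exactly.

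The main content is really the one-line edge-counting estimate that each deletion strips away at least $d+t$ edges; the only subtlety is to pin down the correct cutoff $s$ and to verify that its ceiling/floor matches the two alternatives in the statement, which is handled by the case analysis above. No nontrivial obstacle arises beyond this bookkeeping.
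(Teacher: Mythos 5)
Your proposal is correct and follows essentially the same route as the paper: greedily delete maximum-degree vertices while the maximum degree is at least $d+t$, split into cases according to whether the process stops before $\lceil n/(d+2t+1)\rceil$ deletions, and use the fact that each deletion removes at least $d+t$ edges. The only cosmetic difference is that you verify $d(H)\le d-1$ by clearing denominators down to $n \le s(d+2t+1)$, whereas the paper invokes monotonicity of $x \mapsto \frac{a-bx}{a-x}$ and substitutes $q \ge \frac{n}{d+2t+1}$; both computations are equivalent.
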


\begin{proof}
For an $r \ge0$, let $\{v_1, v_2, \ldots, v_r\}$ be a set of vertices of maximum cardinality such that $\deg_{G_{i+1}}(v_i) \ge d+t$, where $G_{i+1} = G_i - v_i$ and $G_0 = G$. Suppose first that $r \le \lfloor \frac{n}{d+2t+1} \rfloor$ and let $H = G_{r+1}$. Then $H$ has at least $n - r \ge n - \lfloor \frac{n}{d+2t+1} \rfloor$ vertices and $\Delta(H) \le d+t-1$. Now suppose that $r \ge \lceil \frac{n}{d+2t+1} \rceil$. Let now $H = G_{q+1}$, where $q = \lceil \frac{n}{d+2t+1} \rceil$. Then $n(H) = n - \lceil \frac{n}{d+2t+1} \rceil$. Further,
\begin{eqnarray*}
d(H) = \frac{2 e(H)}{n(H)} \le \frac{2(e(G) - (d+t)q)}{n-q} = \frac{dn - 2(d+t)q}{n-q} = \frac{d(n-\frac{2(d+t)}{d}q)}{n-q}.
\end{eqnarray*}
Since, for any real numbers $a \ge 0$ and $b \ge 1$, the function $\frac{a - bx}{a-x}$ is monotonically decreasing in $[0, \infty)$, setting $a = n$ and $b = \frac{2(d+t)}{d}$, we obtain with $q = \lceil \frac{n}{d+2t+1} \rceil \ge \frac{n}{d+2t+1}$
\begin{eqnarray*}
d(H) &\le& \frac{d(n-\frac{2(d+t)}{d}q)}{n-q} \le \frac{d(n- \frac{2(d+t)}{d} \frac{n}{d+2t+1})}{n- \frac{n}{d+2t+1}}\\
&=& \frac{d(d+2t+1) - 2(d+t)}{d+2t} = \frac{d(d+2t) - (d+2t)}{d+2t} = d-1.
\end{eqnarray*}
Hence, we have shown that $G$ has a subgraph $H$ with either $d(H) \le d-1$ and $n(H) = n - \lceil \frac{n}{d+2t+1} \rceil$ or $\Delta(H) \le d+t-1$ and $n(H) = n - \lfloor \frac{n}{d+2t+1}\rfloor$.
\end{proof}

The following corollary is straightforward from this lemma.

\begin{cor}\label{cor_subg}
Let $d, t \ge 0$ be two integers. Let $G$ be a graph on $n$ vertices with average degree $d(G) \le d$ and such that $d+2t+1$ divides $n$. Then $G$ has a subgraph $H$ on $n(H) \ge \frac{d+2t}{d+2t+1}n$ vertices such that either $d(H) \le d-1$ or $\Delta(H) \le d+t-1$.
\end{cor}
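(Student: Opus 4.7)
The plan is to derive the corollary essentially by substitution into Lemma \ref{lem_subg}, exploiting the divisibility hypothesis to collapse the two cases into a single clean bound.

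First, I would invoke Lemma \ref{lem_subg} with the same parameters $d$ and $t$ to obtain a subgraph $H$ of $G$ satisfying one of the two alternatives stated there: either $\Delta(H) \le d+t-1$ with $n(H) \ge n - \lfloor \frac{n}{d+2t+1}\rfloor$, or $d(H) \le d-1$ with $n(H) = n - \lceil \frac{n}{d+2t+1}\rceil$.

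Next, I would use the hypothesis that $d+2t+1$ divides $n$. This ensures that $\frac{n}{d+2t+1}$ is an integer, so both the floor and the ceiling in the two cases coincide with $\frac{n}{d+2t+1}$ itself. Consequently, in either alternative, we have
\[
n(H) \ge n - \frac{n}{d+2t+1} = \frac{(d+2t+1) - 1}{d+2t+1}\, n = \frac{d+2t}{d+2t+1}\, n.
\]

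Finally, I would combine the two cases: the subgraph $H$ produced by Lemma \ref{lem_subg} has at least $\frac{d+2t}{d+2t+1}n$ vertices and satisfies one of the two desired degree conditions ($d(H) \le d-1$ or $\Delta(H) \le d+t-1$), which is exactly the conclusion. There is no serious obstacle here, since the only real content is the arithmetic observation that the divisibility hypothesis makes the floor and ceiling expressions in Lemma \ref{lem_subg} equal, yielding a uniform lower bound on $n(H)$ across both cases.
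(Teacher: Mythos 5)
Your proposal is correct and matches the paper's approach: the paper states the corollary is ``straightforward from this lemma,'' and the intended argument is exactly your observation that the divisibility hypothesis makes $\lfloor \frac{n}{d+2t+1}\rfloor = \lceil \frac{n}{d+2t+1}\rceil = \frac{n}{d+2t+1}$, so both cases of Lemma \ref{lem_subg} give $n(H) \ge n - \frac{n}{d+2t+1} = \frac{d+2t}{d+2t+1}n$.
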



\begin{lem}\label{div}
Let $G$ be a graph on $n$ vertices with average degree $d(G) \le d$ and such that $d+2t+1$ does not divide $n$. Then there is a graph $H$ such that $d+2t+1$ divides $m = n(H)$, $d(H) = d(G) \le d$ and $\frac{\alpha_k(H)}{m} = \frac{\alpha_k(G)}{n}$.
\end{lem}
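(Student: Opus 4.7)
The plan is to construct $H$ by taking an appropriate disjoint union of copies of $G$. Specifically, set $H = sG$, the disjoint union of $s$ copies of $G$, where $s$ is a positive integer chosen so that $(d+2t+1) \mid sn$. The simplest valid choice is $s = d+2t+1$ itself, since then $sn = (d+2t+1)n$ is automatically divisible by $d+2t+1$. (One could alternatively take $s = (d+2t+1)/\gcd(n, d+2t+1)$ to obtain a smaller graph, but there is no need to optimize here.)

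Once $H$ is defined this way, I would verify the three required properties in turn. First, $n(H) = sn$, and divisibility by $d+2t+1$ holds by the choice of $s$. Second, since $e(H) = s\,e(G)$, the average degree is preserved: $d(H) = 2e(H)/n(H) = 2s\,e(G)/(sn) = d(G) \le d$. Third, because the $k$-independence number is additive over connected (or in general disjoint) components, any $k$-independent set of $H$ decomposes into $k$-independent sets of the individual copies, and conversely any choice of a maximum $k$-independent set in each copy yields a $k$-independent set in $H$; hence $\alpha_k(H) = s\,\alpha_k(G)$, and therefore
\[
\frac{\alpha_k(H)}{n(H)} \;=\; \frac{s\,\alpha_k(G)}{sn} \;=\; \frac{\alpha_k(G)}{n}.
\]

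There is no genuine obstacle to this argument; the lemma is essentially a normalization statement allowing one to assume divisibility conditions on $n$ without loss of generality. The only small point worth stating explicitly is the additivity of $\alpha_k$ over disjoint unions, which follows immediately from the fact that the graph induced on a subset of $V(H)$ is the disjoint union of the graphs induced on its intersections with the component copies, so its maximum degree is the maximum of those component-wise maximum degrees. This makes the lemma an essentially cost-free preprocessing step before invoking Corollary~\ref{cor_subg}.
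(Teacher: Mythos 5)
Your proof is correct and is essentially identical to the paper's: both take $H=(d+2t+1)G$, the disjoint union of $d+2t+1$ copies of $G$, and observe that the vertex count becomes divisible by $d+2t+1$ while the average degree and the ratio $\alpha_k/n$ are preserved by additivity over components. Your explicit justification of the additivity of $\alpha_k$ over disjoint unions is a minor elaboration the paper leaves implicit.
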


\begin{proof}
Let $H = (d+2t+1) G$ be the graph consisting of $d+2t+1$ copies of $G$. Then $m = n(H) = (d+2t+1)n$ is multiple of $d+2t+1$, $d(H) = d(G)$ and $\frac{\alpha_k(H)}{m} = \frac{(d+2t+1) \alpha_k(G)}{(d+2t+1)n} = \frac{\alpha_k(G)}{n}$.
\end{proof}

Let $n$ be an even integer. We denote with $J_n$ the graph consisting of a complete graph on $n$ vertices minus a $1$-factor. We are now ready to present the exact value of $f(1,d)$ and the consequences of this result. 

\begin{thm}\label{thm_k=1}
Let $d \ge 0$ be an integer. Then the following statements hold.
\begin{itemize}
\item[(1)] 
$f(1,d) = \begin{cases} \frac{2}{d+2}, &\text{if } d \equiv 0 \modulo{2}\\
                                                          \frac{2(d+2)}{(d+1)(d+3)}, &\text{if } d \equiv 1 \modulo{2}.
                                \end{cases}$
\item[(2)] The equality $f(1,d) = \frac{\alpha_1(G)}{n(G)}$ is attained by the graph $G = J_{d+2}$, when $d$ is even, and by $G=(d+3) J_{d+1} \cup (d+1) J_{d+3}$, when $d$ is odd.
\item[(3)] $f(1,d) \ge \frac{2}{d+2}$.
\item[(4)] For every graph $G$ on $n$ vertices, $\alpha_1(G) \ge \frac{2n}{\lceil d(G) \rceil +2}$.
\end{itemize}
\end{thm}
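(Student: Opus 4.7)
The plan is to handle Part (2) together with the upper bound of Part (1), then establish the lower bound of Part (1) by induction on $d$, and finally derive Parts (3) and (4) as short corollaries. Throughout, let $\phi(d)$ denote the claimed value of $f(1,d)$, i.e.\ $\phi(d)=\frac{2}{d+2}$ when $d$ is even and $\phi(d)=\frac{2(d+2)}{(d+1)(d+3)}$ when $d$ is odd.

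For the extremal graphs and the upper bound on $f(1,d)$, I would first show that $\alpha_1(J_m)=2$ for every even $m\ge 2$. Any two vertices induce $K_2$ or $\overline{K_2}$, whereas any three vertices induce in $J_m$ at least $\binom{3}{2}-1=2$ edges (the deleted $1$-factor contains at most one edge inside a $3$-set), which forces induced maximum degree $\ge 2$. Granting this, $J_{d+2}$ is $d$-regular on $d+2$ vertices and yields ratio $\frac{2}{d+2}$, while $(d+3)J_{d+1}\cup(d+1)J_{d+3}$ has $n=2(d+1)(d+3)$ and $\alpha_1=4(d+2)$; a direct degree-sum computation confirms its average degree is exactly $d$, giving ratio $\frac{2(d+2)}{(d+1)(d+3)}$. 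This settles Part (2) and shows $f(1,d)\le \phi(d)$.

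For the reverse inequality $f(1,d)\ge \phi(d)$, I induct on $d$; the base $d=0$ is trivial. In the step, fix $G$ with $d(G)\le d$ and set
\[
t=\begin{cases}0 & \text{if $d$ is even},\\ 1 & \text{if $d$ is odd}.\end{cases}
\]
By Lemma \ref{div} I may assume $(d+2t+1)\mid n$, so Corollary \ref{cor_subg} produces a subgraph $H$ with $n(H)\ge \frac{d+2t}{d+2t+1}\,n$ and either $\Delta(H)\le d+t-1$ (Case A) or $d(H)\le d-1$ (Case B). In Case A, Observation \ref{obs_HS} with $k=1$ and the parity-appropriate $r\in\{0,1\}$ gives $\alpha_1(H)\ge \frac{2 n(H)}{d}$ for $d$ even and $\alpha_1(H)\ge \frac{2 n(H)}{d+1}$ for $d$ odd, both of which exceed $\phi(d)\,n$ after multiplying by $\frac{d+2t}{d+2t+1}$. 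In Case B, the inductive hypothesis yields $\alpha_1(H)\ge \phi(d-1)\,n(H)$, and a direct computation in each parity confirms the key algebraic identity
\[
\phi(d-1)\cdot\frac{d+2t}{d+2t+1}=\phi(d),
\]
so $\alpha_1(G)\ge \alpha_1(H)\ge \phi(d)\,n$, closing the induction.

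Parts (3) and (4) are short corollaries: since $(d+2)^2>(d+1)(d+3)$ for every integer $d\ge 0$, we have $\phi(d)\ge \frac{2}{d+2}$, which proves (3); and (4) follows by setting $d=\lceil d(G)\rceil$ and invoking (3) to get $\alpha_1(G)\ge f(1,d)\,n\ge \frac{2n}{\lceil d(G)\rceil+2}$. The main obstacle I anticipate is the bookkeeping inside the induction: the parity-dependent choice of $t$, the matching choice of $r$ in Observation \ref{obs_HS}, and the divisibility reduction via Lemma \ref{div} must conspire so that Case B produces the exact identity $\phi(d-1)\cdot\frac{d+2t}{d+2t+1}=\phi(d)$ and Case A produces a matching denominator without slack. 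Once those coincidences are verified cleanly in each parity, the induction runs without loss.
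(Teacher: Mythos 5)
Your proposal is correct and follows essentially the same route as the paper: the same parity-dependent choice of $t$, the same reduction via Lemma \ref{div} and Corollary \ref{cor_subg}, Hopkins--Staton in the bounded-maximum-degree case, induction in the reduced-average-degree case, and the same extremal graphs $J_{d+2}$ and $(d+3)J_{d+1}\cup(d+1)J_{d+3}$. The only cosmetic difference is that for even $d$ you apply Hopkins--Staton directly when $\Delta(H)\le d-1$, whereas the paper folds that case into the inductive one by noting $d(H)\le d-1$; both yield the required bound.
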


\begin{proof}
(1) We will prove by induction on $d$ that 
$$f(1,d) \ge \begin{cases} \frac{2}{d+2}, &\text{if } d \equiv 0 \modulo{2}\\
                                                          \frac{2(d+2)}{(d+1)(d+3)}, &\text{if } d \equiv 1 \modulo{2}.
                                \end{cases}$$
For $d = 0$, clearly $f(1,0) = 1 = \frac{2}{0+2}$, as the only possible graph $G$ with $d(G) \le 0$ is the empty graph. For $d=1$, let $G$ be a graph with $d(G) \le 1$. Setting $t=1$, we can suppose by Lemma \ref{div} that $4$ divides $n(G) = n$. Hence, Corollary \ref{cor_subg} implies that there is a subgraph $H$ of $G$ on $n(H) \ge \frac{3}{4}n$ vertices with $d(H) \le 0$ or $\Delta(H) \le 1$. In both cases we have clearly $\alpha_1(G) \ge \alpha_1(H) = n(H) \ge \frac{3}{4}n$ and hence $f(1,1) = \inf\{\frac{\alpha_1(G)}{n(G)} : G \mbox{ graph with } d(G) \le 1 \} \ge \frac{3}{4}= \frac{2(1+2)}{(1+1)(1+3)}$. \\
Assume we have proved the statement for $f(1,d-1)$. Now we will prove it for $f(1,d)$, where $d > 1$. Let $G$ be a graph on $n$ vertices such that $d(G) \le d$. We distinguish two cases.\\
\emph{Case 1. Suppose that $d \equiv 0 \modulo{2}$.} Setting $t=0$, we can suppose by Lemma \ref{div} that $d+1$ divides $n$. By Corollary \ref{cor_subg}, there is a subgraph $H$ of $G$ on at least $\frac{d}{d+1}n$ vertices with either $d(H) \le d-1$ or $\Delta(H) \le d-1$. Hence, in both cases $d(H) \le d-1$ and thus, by induction, we have 
$$\alpha_1(G) \ge \alpha_1(H) \ge f(1,d-1) n(H) \ge  \frac{2(d+1)d}{d(d+2)(d+1)} n = \frac{2}{d+2}n.$$
Hence, $f(1,d) =  \inf\{\frac{\alpha_1(G)}{n(G)} : G \mbox{ graph with } d(G) \le d \} \ge \frac{d}{d+2}$ and we are done.\\
\emph{Case 2. Suppose that $d \equiv 1 \modulo{2}$.} Set $t =1$. By Lemma \ref{div}, we can suppose that $d+3$ divides $n$. By Corollary \ref{cor_subg}, there is a subgraph $H$ of $G$ on at least $\frac{d+2}{d+3}n$ vertices with either $d(H) \le d-1$ or $\Delta(H) \le d$. If $d(H) \le d-1$, we can apply the induction hypothesis on $H$ and we obtain
$$\alpha_1(G) \ge \alpha_1(H) \ge f(1,d-1) n(H) \ge  \frac{2(d+2)}{(d+1)(d+3)} n$$
and we are done. Suppose finally that $\Delta(H) \le d$. Then, by Theorem \ref{thm_HS} and as $d$ is odd, we have
$$\alpha_1(G) \ge \alpha_1(H) \ge \frac{n(H)}{\left\lceil \frac{\Delta(H) + 1}{2} \right\rceil} \ge \frac{ \frac{d+2}{d+3}n }{\left\lceil \frac{d + 1}{2}\right\rceil}=  \frac{2(d+2)}{(d+1)(d+3)} n.$$
Thus, in both cases, $f(1,d) = \inf\{\frac{\alpha_1(G)}{n(G)} : G \mbox{ graph with } d(G) \le d \} \ge \frac{2(d+2)}{(d+1)(d+3)}$
Hence, by induction, the statement holds.\\
Let $d$ be even. Clearly $\alpha_1(J_{d+2})= 2$ and hence, $f(1,d) \le \frac{\alpha_1(J_{d+2})}{d+2} = \frac{2}{d+2}$. For $d$ odd, the graph $G = (d+3) J_{d+1} \cup (d+1) J_{d+3}$ has $\alpha_1(G) = (d+3) 2 + (d+1)2 = 4(d+2)$. Hence $f(1,d) \le \frac{\alpha_1(G)}{n(G)} =  \frac{4(d+2)}{2(d+1) (d+3)}$. Together with the inequalities proven above, it follows
$$f(1,d) = \begin{cases} \frac{2}{d+2}, &\text{if } d \equiv 0 \modulo{2}\\
                                                          \frac{2(d+2)}{(d+1)(d+3)}, &\text{if } d \equiv 1 \modulo{2}.
                                \end{cases}$$
(2) This follows from the discussion in (1).\\
(3) It is easily seen that $\frac{2(d+2)}{(d+1)(d+3)} \ge \frac{2}{d+2}$. Hence we have always $f(1,d) \ge \frac{2}{d+2}$.   \\
(4) From item (2), we obtain $\alpha_1(G) \ge f(1, \lceil d(G) \rceil) n \ge \frac{2}{\lceil d(G) \rceil+2}$ n.                    
\end{proof}

We can now state and prove our main result generalizing the proof of Theorem \ref{thm_k=1} to arbitrary $k$ and $d$.

\begin{thm}\label{main}
Let $d, k \ge 0$ be two integers. Then the following statements hold.
\begin{itemize}
\item[(1)] $f(k,d) \ge \frac{(k+1)(d+2t)}{(d+k+t+1)(d+t)} \ge \frac{k+1}{d+k+1}$, where $t$ is such that $d \equiv k+1 - t  \modulo{k+1}$ and $1 \le t \le k+1$.
\item[(2)] For $k \ge d$, $f(k,d) \ge \frac{2k+2-d}{2k+2}$. For $k \ge d = 1$, the bound is realized by the graph $K_{1,k+1} \cup kK_1$ and thus $f(k,1) = \frac{2k+1}{2k+2}$.
\item[(3)] For any graph $G$ on $n$ vertices, $\alpha_k(G) \ge \frac{k+1}{\lceil d(G) \rceil + k+1} n$.
\end{itemize}
\end{thm}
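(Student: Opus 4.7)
The plan is to mirror the proof of Theorem \ref{thm_k=1} by induction on $d$. The base case $d=0$ is trivial, since only the empty graph satisfies $d(G)\le 0$, giving $f(k,0)=1$. For the inductive step, fix a graph $G$ on $n$ vertices with $d(G)\le d$ and let $t\in\{1,\dots,k+1\}$ satisfy $d\equiv k+1-t \modulo{k+1}$. I would invoke Lemma \ref{lem_subg} with an auxiliary parameter $t_L$ chosen as follows: in the generic case $1\le t\le k$, set $t_L=t$; in the boundary case $t=k+1$ (i.e., $d\equiv 0 \modulo{k+1}$), set $t_L=0$ instead. The latter bifurcation is forced because taking $t_L=k+1$ would cause the $\Delta$-bounded outcome of Lemma \ref{lem_subg} to lose a factor $\tfrac{d+2k+2}{d+2k+3}$ and undershoot the target $\tfrac{k+1}{d+k+1}$.

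By Lemma \ref{div} we may replace $G$ by several copies of itself and assume $(d+2t_L+1)\mid n$. Corollary \ref{cor_subg} then yields a subgraph $H$ on $n(H)\ge\tfrac{d+2t_L}{d+2t_L+1}n$ vertices with either $d(H)\le d-1$ or $\Delta(H)\le d+t_L-1$. In the first subcase, I apply the induction hypothesis at $d-1$, whose residue parameter is $t'=t+1$ (if $t\le k$) or $t'=1$ (if $t=k+1$); the product of $f(k,d-1)$ with $\tfrac{d+2t_L}{d+2t_L+1}$ telescopes exactly to the claimed bound $\tfrac{(k+1)(d+2t)}{(d+k+t+1)(d+t)}$. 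In the second subcase, the choice of $t_L$ ensures $d+t_L\equiv 0 \modulo{k+1}$, so $\lceil(\Delta(H)+1)/(k+1)\rceil=(d+t_L)/(k+1)$ and Theorem \ref{thm_HS} gives $\alpha_k(H)\ge\tfrac{(k+1)n(H)}{d+t_L}$, which after a short comparison dominates the target (using $k\ge t$ when $t_L=t$, and $d+1\le d+k+1$ when $t_L=0$). The auxiliary inequality $\tfrac{(k+1)(d+2t)}{(d+k+t+1)(d+t)}\ge\tfrac{k+1}{d+k+1}$ reduces, after cross-multiplication, to $t(k+1-t)\ge 0$, which holds on $1\le t\le k+1$.

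For part (2), when $k\ge d$ we have $d(G)\le d\le k+1$, so Corollary \ref{coro_CT} applied on the linear branch $f_k(x)=1-x/(2(k+1))$ gives $\alpha_k(G)\ge\bigl(1-\tfrac{d(G)}{2(k+1)}\bigr)n\ge\tfrac{2k+2-d}{2k+2}n$. Tightness at $d=1$ is by direct inspection of $K_{1,k+1}\cup kK_1$: its $2k+2$ vertices have average degree $1$, and a maximum $k$-independent set consists of the center of the star together with $k$ of its leaves and all $k$ isolated vertices, yielding $2k+1$ (the center has degree exactly $k$ in the induced subgraph, so $k$-independence holds). Part (3) is immediate from part (1) applied with $d=\lceil d(G)\rceil$ combined with the second inequality in (1). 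The main obstacle is keeping the residue bookkeeping consistent at the $t=k+1$ boundary and verifying that the factor $\tfrac{d+2t_L}{d+2t_L+1}$ coming out of Corollary \ref{cor_subg} really does absorb the leading numerator of $f(k,d-1)$ without slack, so the induction closes in both subcases.
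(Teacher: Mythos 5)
Your proof follows essentially the same route as the paper's: induction on $d$ via Lemma \ref{lem_subg}/Corollary \ref{cor_subg}, with the deletion parameter switched to $t_L=0$ at the residue boundary $d\equiv 0 \pmod{k+1}$, Hopkins--Staton in the bounded-maximum-degree branch, exact telescoping of $f(k,d-1)$ against the factor $\tfrac{d+2t_L}{d+2t_L+1}$ in the reduced-average-degree branch, and the auxiliary inequality correctly reduced to $t(k+1-t)\ge 0$. The only divergence is in part (2), where you obtain $f(k,d)\ge\tfrac{2k+2-d}{2k+2}$ from the linear branch of Corollary \ref{coro_CT} rather than by substituting $d=k+1-t$ into the formula of part (1); both are valid one-line derivations, so the proposal is correct.
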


\begin{proof}
(1) We will proceed to prove the inequality $f(k,d) \ge \frac{(k+1)(d+2t)}{(d+k+t+1)(d+t)}$ by induction on $d$. If $d=0$, then $d \equiv (k+1) - (k+1)$ and clearly $f(k,0) = 1 = \frac{(k+1) (0 + 2)(k+1))}{(0+k+(k+1)+1) (0+(k+1))}$, as the only possible graph $G$ with $d(G) \le 0$ is the empty graph.

Assume $f(k,d-1) \ge \frac{(k+1)(d-1+2t')}{(d+k+t')(d-1+t')}$, where $d-1 \equiv k+1 - t' \modulo{k+1}$, $1 \le t' \le k+1$, and $d \ge 1$. We will prove the statement for $d$. Herefor, we distinguish two cases.\\
\emph{Case 1. Suppose that $d \equiv 0 \modulo{k+1}$.} Then $t = k+1$. Let $G$ be a graph on $n$ vertices such that $d(G) \le d$. By Lemma \ref{div} (setting there $t=0$), we can suppose that $d+1$ divides $n$. Then from Corollary \ref{cor_subg} it follows that there is a subgraph $H$ of $G$ on at least $\frac{d}{d+1}n$ vertices such that $d(H) \le d-1$ or $\Delta(H) \le d-1$. In both cases we have $d(H) \le d-1$. Then, as $d-1 \equiv (k+1) - 1 \modulo{k+1}$, we obtain by induction
\begin{eqnarray*}
\alpha_k(G) &\ge& \alpha_k(H) \ge \frac{(k+1)(d+1)}{(d+k+1)d} n(H) \ge \frac{k+1}{d+k+1}n \\ 
&=& \frac{(k+1)(d+2t)}{(d+2t)(d+k+1)}n = \frac{(k+1)(d+2t)}{(d+k+t+1)(d+t)}n.
\end{eqnarray*}
Thus, $f(k,d) =  \inf\{\frac{\alpha_k(G)}{n(G)} : G \mbox{ graph with } d(G) \le d \} \ge \frac{(k+1)(d+2t)}{(d+k+t+1)(d+t)}$ and we are done.\\
\emph{Case 2. Suppose that $d \equiv k+1 - t \modulo{k+1}$ for  some $t$ with $1 \le t \le k$.}
By Corollary \ref{cor_subg}, there is a subgraph $H$ of $G$ on $n(H) \ge \frac{d+2t}{d+2t+1}n$ vertices with either $d(H) \le d-1$ or $\Delta(H) \le d+t-1$. If $\Delta(H) \le d+t-1$, then Theorem \ref{thm_HS} yields
\begin{eqnarray*}
\alpha_k(G) \ge \alpha_k(H) \ge \frac{n(H)}{\left\lceil \frac{\Delta(H)+1}{k+1} \right\rceil} 
                                           \ge \frac{\frac{d+2t}{d+2t+1}n}{\left\lceil \frac{d+t}{k+1} \right\rceil}  
                                           &=& \frac{(k+1)(d+2t)}{(d+2t+1)(d+t)}n\\
                                           &\ge& \frac{(k+1)(d+2t)}{(d+k+t+1)(d+t)} n.
\end{eqnarray*} 
Hence, $f(k,d) =  \inf\{\frac{\alpha_k(G)}{n(G)} : G \mbox{ graph with } d(G) \le d \} \ge \frac{(k+1)(d+2t)}{(d+k+t+1)(d+t)}$                                           
and we are done. Suppose now that $d(H) \le d-1$. Since $d -1 \equiv (k+1) - (t+1)$, we obtain by induction
\begin{eqnarray*}
\alpha_k(G) \ge \alpha_k(H) &\ge& \frac{(k+1)((d-1)+2(t+1))}{((d-1)+k+(t+1)+1)((d-1)+(t+1))} n(H) \\
                                                                    &\ge& \frac{(k+1)(d+2t+1)}{(d+k+t+1)(d+t)}\cdot \frac{d+2t}{d+2t+1} n\\
                                                                    &=& \frac{(k+1)(d+2t)}{(d+k+t+1)(d+t)} n.
\end{eqnarray*} 
Thus, again, $f(k,d) \ge \frac{(k+1)(d+2t)}{(d+k+t+1)(d+k+1)}$ and Case 2 is done. \\
Hence, by induction, the statement holds. Finally, the inequality $ \frac{(k+1)(d+2t)}{(d+k+t+1)(d+t)} \ge \frac{k+1}{d+k+1}$ follows easily.\\
(2) Let $k \ge d$ and let $t$ be such that $d \equiv k+1 - t \modulo{k+1}$ and $1 \le t \le k+1$. Then $d = k+1-t$. Hence, with (1),
$$f(d,k) \ge  \frac{(k+1)(d+2t)}{(d+k+t+1)(d+t)} = \frac{(k+1)(d+ 2(k+1-d))}{(d+k+(k+1-d)+1)(d+(k+1-d))} = \frac{2k+2-d}{2k+2}.$$
Let $G = K_{1,k+1} \cup kK_1$. Then $\alpha_k(G) = 2k+1$, $n(G) = 2k+2$ and $d(G) = \frac{2k+2}{2k+2} = 1$. Hence, $\frac{2k+1}{2k+2} \le f(k,1) \le \frac{\alpha_k(G)}{n(G)} = \frac{2k+1}{2k+2}$, obtaining thus equality.\\
(3) If $G$ is a graph on $n$ vertices, then, using (1), we obtain 
$$\frac{\alpha_k(G)}{n} \ge f(k, \lceil d(G) \rceil) \ge \frac{k+1}{\lceil d(G) \rceil +k+1}.$$
\end{proof}

The proofs of Lemma \ref{lem_subg} and Theorem \ref{main} yield us an algorithm for finding, for any graph $G$ on $n$ vertices, a $k$-independent set of cardinality at least $\frac{k+1}{\lceil d(G) \rceil + k+1} n$. It works the following way. It computes $d=d(G)$ and $\Delta(G)$ and finds the integer $t$ such that $0 \le t \le k$ and $d \equiv k+1 - t  \modulo{k+1}$ (note that the case $t=0$ corresponds here to the case $t=k+1$ of Theorem \ref{main}). Then it checks if the graph satisfies the condition $\Delta(G) \le d+t-1$. If so, then it performs a Lov\'asz partition into $k$-independent sets, selects the largest set from it and gives this as output. If not, then a vertex of maximum degree is deleted and the condition on the maximum degree is checked again on the remaining graph. This deletion step is repeated up to $ \lceil\frac{n}{d+2t+1} \rceil$ times, as, by Lemma \ref{lem_subg}, if the maximum degree is still larger than $d+t-1$, then we are left with a graph with smaller average degree, with which the algorithm starts over again, doing here the inductive step of Theorem \ref{main}. \\

\noindent
{\bf Algorithm 2}\\[1ex]
INPUT: a graph $G$ on $n$ vertices and $m$ edges.\\[-5ex]
\begin{itemize}
\item[(1)] Compute $\Delta(G)$ and $d(G)$.  Set $d = \lceil d(G) \rceil$ and determine $t$ such that $0 \le t \le k$ and $d \equiv k+1 - t  \modulo{k+1}$. Set $r:=0$ and  GO TO (2).
\item[(2)] If $\Delta(G) \le d+t-1$, perform a Lov\'asz partition into $k$-independent sets, choose the largest class $S$ and END. Otherwise GO TO (3).
\item[(3)]  Set $r := r+1$. If $r > \lceil\frac{n}{d+2t+1} \rceil$, set $n := n -  \lceil \frac{n}{d+2t+1} \rceil$ and GO TO (1). Otherwise choose a vertex $v$ of maximum degree $\Delta(G)$, set $G:= G - v$, compute $\Delta(G)$ and GO TO (2). \\[-5ex]
\end{itemize}  
OUTPUT: $S$

The algorithm terminates as, at some step, $\Delta(G) \le \lceil d(G) \rceil +t-1$ must hold (the latest when $G$ is the empty graph).
Again, the algorithm has a running time of at most $O(n^3)$.


\section{Upper bounds on $f(k,d)$ and determination of $f(k,d)$ for further small values}

Observe that after Theorems \ref{thm_k=1}(1) and \ref{main}(2), we know the exact value of $f(k,d)$ in case $\min\{d,k\} \le 1$. The first pair $(k,d)$ for which an exact value of $f(k,d)$ is not known yet is $(2,2)$. In this section, we develop several upper bounds on $f(k,d)$ as a starting point to future research to obtain further exact values of $f(k,d)$. We will use the following Theorem.

\begin{thm}[see \cite{Bol}, p.108]\label{thm_bol}
Let $r, g \ge 3$ be two integers. If $m$ is an integer with $m \ge \frac{(r-1)^{(g-1)}-1}{r-2}$, then there exists an $r$-regular graph of girth at least $g$ and order $2m$.
\end{thm}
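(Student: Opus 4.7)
The plan is to follow the classical Erd\H{o}s--Sachs strategy in two stages: first prove the existence of some $r$-regular graph of girth at least $g$, and then show that the order can be adjusted to every even value $2m$ with $m$ above the stated threshold.

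For the existence stage, I would use an edge-swap argument. Start from any $r$-regular graph $G$ of even order, for instance $K_{r+1}$ when $r$ is even or $K_{r,r}$ when $r$ is odd, inflated to sufficiently many disjoint copies to ensure it has enough vertices. Whenever $G$ contains a cycle of length less than $g$, locate two edges $u_1v_1$ and $u_2v_2$ whose endpoints all lie at distance at least $g$ from one another and from the short cycle; remove these two edges and insert the crossing edges $u_1u_2$ and $v_1v_2$. This preserves $r$-regularity. A BFS counting argument, using that $1+(r-1)+\cdots+(r-1)^{g-2} = \frac{(r-1)^{g-1}-1}{r-2}$ bounds the number of vertices within distance $g-2$ of a fixed edge, ensures that suitably far-apart edges exist whenever the order exceeds the threshold, and the swap strictly reduces the number of short cycles. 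Iterating terminates in an $r$-regular graph of girth $\ge g$.

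To hit every even order $2m$ beyond the threshold, I would use a connected-sum construction: given two $r$-regular graphs $G_1, G_2$ of girth $\ge g$, pick edges $u_iv_i \in E(G_i)$ whose radius-$\lfloor (g-1)/2 \rfloor$ neighborhoods are trees (such edges exist in any sufficiently large such graph by the same Moore-type count), delete them, and add the crossing edges $u_1u_2$ and $v_1v_2$. The resulting graph is $r$-regular of order $n(G_1)+n(G_2)$, and since the four endpoint neighborhoods remain tree-like in the new graph, no cycle of length $<g$ is created. Combined with a separate construction producing two seed graphs whose orders differ by $2$, the set of achievable orders becomes a numerical semigroup containing every sufficiently large even integer, and a final arithmetic check pins the threshold at $2m \ge \frac{2((r-1)^{g-1}-1)}{r-2}$.

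The main obstacle I foresee is the precise matching between the Moore-type quantity $\frac{(r-1)^{g-1}-1}{r-2}$ and the threshold at which the surgery is guaranteed to work. The key lemma, that an $r$-regular graph of girth $\ge g$ with order exceeding this quantity contains an edge with tree-like radius-$\lfloor (g-1)/2 \rfloor$ neighborhood, is a routine BFS-tree count against the Moore bound; however, producing the $+2$ increment at each connected-sum step without inflating the threshold beyond the stated value requires careful bookkeeping of the seed orders, and verifying that the far-apart edges used in the surgery simultaneously satisfy all the required distance constraints is where most of the casework will live.
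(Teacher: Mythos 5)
The paper does not actually prove this statement: it is quoted from Bollob\'as's book and is the Erd\H{o}s--Sachs theorem, whose standard proof fixes the order $n=2m$ in advance, considers among all graphs on these $n$ vertices with girth at least $g$ and maximum degree at most $r$ one of maximum size, and uses a Moore-type count together with local edge additions and switches to show that such an extremal graph must already be $r$-regular. Your two-stage plan is a genuinely different route, but as written it has two real gaps.

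First, in your girth-increasing surgery both deleted edges $u_1v_1$ and $u_2v_2$ are chosen far from the short cycle, so after replacing them by $u_1u_2$ and $v_1v_2$ the short cycle is untouched and the number of cycles of length less than $g$ has not decreased; the correct switch must delete an edge lying \emph{on} a shortest cycle and pair it with a far-away edge. Second, and more seriously, the order-adjustment stage cannot reach the stated conclusion. Writing $2m_0$ for the threshold, the connected sum of two admissible graphs of orders $n_1,n_2\ge 2m_0$ has order $n_1+n_2\ge 4m_0$, so the set of orders generated from any finite family of seeds necessarily has a gap between the largest seed and twice the smallest seed; no numerical-semigroup bookkeeping produces \emph{every} even $2m$ with $m\ge \frac{(r-1)^{g-1}-1}{r-2}$ unless you already have seeds at every such order in a window of length about $2m_0$, which is essentially the full strength of the theorem. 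Your stage 1 does not supply these seeds either, since starting from disjoint copies of $K_{r+1}$ or $K_{r,r}$ confines you to orders divisible by $r+1$ or $2r$. Since hitting each prescribed even order is exactly the content of the statement (bare existence of $r$-regular graphs of large girth being the easy part), the proposal does not yet prove it. Incidentally, the tree-like-neighborhood hypothesis in your connected sum is unnecessary: if $G_i$ has girth at least $g$ then $d_{G_i-u_iv_i}(u_i,v_i)\ge g-1$ automatically, and any new cycle must use both crossing edges, hence has length at least $2g$.
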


Define the function $h(r) = \frac{(r-1)^{r+3}-1}{r-2}$. We will use the particular form of this theorem with $m \ge h(r)$, implying that there is an $r$-regular graph of girth at least $r+4$ and order $2m$.

In the proof of the following theorem, we use the following notation. $\overline{G}$ denotes the complementary graph of $G$. If $F \subseteq E(G)$, then $G-F$ represents the graph $G$ without the edges contained in $F$. For $q \le n$, $K_n - E(K_q)$ stands for the complete graph $K_n$ without the edges of a subgraph $K_q$. Further given two graphs $G$ and $H$, $G \cup H$ is the graph consisting of one copy of $H$ and one copy of $G$ and $qG$ denotes the union of $q$ copies of $G$. Finally, the girth of a graph $G$ is denoted by $g(G)$.

\begin{thm}\label{upper_bounds}
Let $d, k \ge 0$ be two integers. Then the following statements hold.
\begin{itemize}
\item[(1)] For $d \ge k$, $\frac{k+1}{d+k+1} \le f(k,d) \le \frac{k+1}{d+1}$.
\item[(2)] For $d > k$, $d \equiv 0 \modulo{2}$ and $k \equiv 1 \modulo{2}$, $f(k,d) \le \frac{k+1}{d+2}$.
\item[(3)] For $d > k$, $f(k,d) \le \frac{k+2}{d+3}$.
\item[(4)] For $k \ge 3$, $d \ge 2h(k) - k-1$ and $d+k+1 \equiv 0 \modulo{2}$, $f(k,d) \le \frac{k+2}{d+k+1}$.
\item[(5)] For $2 \le d \le 4+6q$, where $q \ge 0$ is an integer, $\frac{3}{d+3} \le f(2,d) \le \frac{3}{d+1+\frac{1}{q+1}}$.
\item[(6)] For $k \ge 2$, $\frac{k}{k+1} \le f(k,2) \le \frac{k+1}{k+2+\frac{1}{k+1}}$.
\item[(7)] For $k \ge 3$, there is a constant $c > 0$ auch that $f(k,d) < \frac{k+2}{d+ c(\frac{d}{2})^{\frac{1}{k+2}} +1}$.
\end{itemize}
\end{thm}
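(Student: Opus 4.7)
The plan is constructive: build $G$ as the edge-complement of a sparse high-girth regular graph furnished by Theorem \ref{thm_bol}, in the spirit of the construction behind item (4), but with the degree $r$ of the auxiliary graph allowed to grow with $d$. Set $h_k(r)=\frac{(r-1)^{k+3}-1}{r-2}$; by Theorem \ref{thm_bol} with $g=k+4$, there is an $r$-regular graph of girth at least $k+4$ on $2m$ vertices whenever $m\ge h_k(r)$.

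Given $k\ge 3$ and $d$ sufficiently large, the estimate $(r-1)/(r-2)\le 2$ for $r\ge 3$ yields $2h_k(r)\le 4(r-1)^{k+2}$. Take $r\ge 3$ to be the largest integer with $4(r-1)^{k+2}\le d$, so that $r\ge \tfrac12(d/2)^{1/(k+2)}$ and $2h_k(r)\le d+r+1$. Let $n$ be the largest even integer $\le d+r+1$ (so $n\ge d+r\ge 2h_k(r)$), pick an $r$-regular graph $H$ of girth at least $k+4$ on $n$ vertices, and put $G=K_n-E(H)$. Then $G$ is $(n-1-r)$-regular with $d(G)\le d$. For the key bound $\alpha_k(G)\le k+2$, suppose $S\subseteq V(G)$ is $k$-independent with $|S|\ge k+3$; passing to a subset (still $k$-independent, since induced degrees only decrease) we may assume $|S|=k+3$. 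For each $v\in S$,
\begin{equation*}
\deg_{G[S]}(v)+\deg_{H[S]}(v)=|S|-1=k+2,
\end{equation*}
and $\deg_{G[S]}(v)\le k$ forces $\deg_{H[S]}(v)\ge 2$. Hence $H[S]$ has minimum degree at least $2$, so it contains a cycle of length at most $|S|=k+3<k+4\le g(H)$, contradicting the girth of $H$. Therefore
\begin{equation*}
f(k,d)\le \frac{\alpha_k(G)}{n}\le \frac{k+2}{n}\le \frac{k+2}{d+r}\le \frac{k+2}{d+\tfrac12(d/2)^{1/(k+2)}},
\end{equation*}
and for any $c<\tfrac12$ one has $(\tfrac12-c)(d/2)^{1/(k+2)}>1$ once $d$ is large, yielding the claimed $f(k,d)<\frac{k+2}{d+c(d/2)^{1/(k+2)}+1}$.

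The main obstacle is not conceptual but careful bookkeeping: one must check that $r\ge 3$ is attainable (which forces $d$ above a $k$-dependent threshold), track the $O(1)$ parity slack between $2h_k(r)$, $n$ and $d+r+1$ (absorbed into the passage from $\tfrac12$ to $c$), and handle small $d$, where the claimed bound is either vacuous (the right-hand side exceeds $1\ge f(k,d)$) or follows by choosing $c$ small enough. Any residual divisibility issue can be removed by applying Lemma \ref{div} to disjoint copies of $G$, which preserves both $d(G)$ and $\alpha_k(G)/n(G)$.
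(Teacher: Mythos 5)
Your proposal addresses only item (7) of the theorem. Items (1) through (6) are never proved: each of them requires its own explicit extremal construction together with a computation of its average degree and its $k$-independence number (the paper uses $K_{d+1}$ for the upper bound in (1), $K_{d+2}$ minus a perfect matching for (2), $K_{d+3}-E(C_{d+3})$ for (3), $K_n-E(H)$ with $H$ a $k$-regular graph of girth at least $k+4$ for (4), $(K_{d+2}-E(K_3))\cup q(K_{d+1}-E(K_3))$ for (5), and $(K_{k+3}-E(K_{k+1}))\cup kK_{1,k+1}$ for (6), with the lower bounds in (1), (5), (6) imported from Theorem \ref{main}). You even invoke ``the construction behind item (4)'' as if it were already available, but nothing in your write-up establishes it. As a proof of the stated theorem this is therefore incomplete: six of the seven claims are missing.

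For item (7) itself your argument is correct and follows the same route as the paper: take $G=K_n-E(H)$ with $H$ an $r$-regular graph of girth at least $k+4$ supplied by Theorem \ref{thm_bol}, show $\alpha_k(G)\le k+2$, and let $r$ grow like $(d/2)^{1/(k+2)}$. Two points are actually handled better in your version than in the paper. First, your proof that a $k$-independent set $S$ has at most $k+2$ vertices (every $v\in S$ with $\deg_{G[S]}(v)\le k$ has $\deg_{H[S]}(v)\ge 2$, so $H[S]$ would contain a cycle shorter than the girth) is a clean equivalent of the paper's ``$\overline{G}[T]$ is a forest'' argument. Second, your quantitative bookkeeping --- choosing $r$ as the largest integer with $4(r-1)^{k+2}\le d$, checking $r\ge\frac12(d/2)^{1/(k+2)}$ and $n\ge 2h(r)$, and absorbing the parity slack into the constant --- is a rigorous replacement for the paper's informal ``$d\sim 2r^{k+2}$, hence $r=c(d/2)^{1/(k+2)}$'' step, and your remark that small $d$ is handled by shrinking $c$ (using, say, item (3)) correctly makes the constant uniform in $d$ for fixed $k$. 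So the item (7) portion stands; the gap is everything else.
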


\begin{proof}
(1) The lower bound follows from Theorem \ref{main}. The upper bound follows from
$f(k,d) \le \frac{\alpha_k(K_{d+1})}{d+1} = \frac{k+1}{d+1}$.\\
(2) Let $G$ be the graph $K_{d+2}$ minus a $1$-factor (this is possible, as $d$ is assumed even). Then $d(G) = d$. Let $T\subseteq V(G)$ be any subset of $k+2$ vertices. As $k+1 \equiv 0 \modulo{2}$, not every vertex of $T$ is covered by the edges of the $1$-factor in $\overline{G}[T]$. Hence, at least one vertex from $T$ is adjacent in $G$ to all other vertices from $T$. Hence, no subset of $k+2$ vertices can be a $k$-independent set and thus $\alpha_k(G) \le k+1$. This implies $f(k,d) \le \frac{k+1}{d+2}$.\\
(3) Let $d >k$. Consider the graph $G = K_{d+3} - E(C_{d+3})$, where $C_{d+3}$ is a cycle of length $d+3$ in $K_{d+3}$. Then $d(G) =d$. Let $T \subseteq V(G)$ a subset of $k+3$ vertices. Since $k+3 < d+3 = n(G)$, the graph $\overline{G}[T]$ contains no cycles. Hence it there is at least one vertex in $v \in V(T)$ which is adjacent in $G[T]$ to all but at most one vertex and hence $\deg_{G[T]}(v) \ge k+1$. This implies that $\alpha_k(G) \le k+2$ and thus $f(k,d) \le \frac{k+2}{d+3}$.\\
(4) Let $k \ge 3$, $d \ge 2h(k) - k-1$ and $d+k+1 \equiv 0 \modulo{2}$. By Theorem \ref{thm_bol}, there is a $k$-regular graph $H$ with $g(H) \ge k+4$ and $n(H) = d+k+1 = n$. Consider now the graph $G = K_n - E(H)$. Then $d(G) = n-1-k = d$. Let $T \subseteq V(G)$ be a subset of $k+3$ vertices. Since $g(H) \ge k+4$, $\overline{G}[T]$ is a forest. Hence it there is at least one vertex in $v \in V(T)$ which is adjacent in $G[T]$ to all but at most one vertex and hence $\deg_{G[T]}(v) \ge k+1$. Thus, $\alpha_k(G) \le k+2$ and we obtain $f(k,d) \le \frac{k+2}{d+k+1}$.\\
(5) Consider the graph $G = (K_{d+2}-E(K_3)) \cup q(K_{d+1}-E(K_3))$. Then $n(G) = (q+1) d + q+2$ and $d(G) n(G) = (d-1)(d+1)+3(d-1) + q((d-2)d + 3(d-2)) = (q+1)d^2+(q+3)d-(4+6q)$. Since $d \le 4+6q$, it follows that 
$$d(G) = \frac{(q+1)d^2+(q+3)d-(4+6q)}{(q+1)d+q+2} \le \frac{(q+1)d^2+(q+3)d-d}{(q+1)d+q+2} = d.$$ 
As clearly $\alpha_2(G) = 3(q+1)$, we obtain therefore, together with Theorem \ref{main} (1), 
$$\frac{3}{d+3} \le f(2,d) \le \frac{3(q+1)}{(q+1)d+q+2} = \frac{3}{d+\frac{q+2}{q+1}} = \frac{3}{d+1+\frac{1}{q+1}}.$$ 
(6) Let $k \ge 2$ and consider the graph $G = (K_{k+3}-E(K_{k+1})) \cup k K_{1,k+1}$. Then $n(G) = k+3 + k(k+2) = k^2+3k+3$ and $d(G) n(G) = 2(k+2) + (k+1) 2 + 2k(k+1) = 2(k^2+3k+3) = 2 n(G)$. Hence, $d(G) = 2$. Moreover, it is easy to see that $\alpha_k(G)= (k+1)^2$. Thus this implies that
$$f(k,2) \le \frac{\alpha_k(G)}{n(G)} = \frac{(k+1)^2}{k^2+3k+3} = \frac{k+1}{k+2+\frac{1}{k+1}}.$$
Together with the bound from item(2) of Theorem \ref{main}, we obtain
$$\frac{k}{k+1} \le f(k,2) \le \frac{k+1}{k+2+\frac{1}{k+1}}.$$
(7) By Theorem \ref{thm_bol} there is an $r$-regular graph $H$ with $g(H) \ge k+4$ and $n = n(H) \ge \frac{2((r-1)^{k+3}-1)}{r-2}$. Take $n$ even and let $G = K_n - E(H)$. Then $d = d(G) = n-1-r$. Let $T \subseteq V(G)$ be a subset of $k+3$ vertices. As $g(H) \ge k+4$, $\overline{G}[T]$ is a forest and thus there is a vertex in $T$ which is adjacent in $G$ to all other vertices from $T$ with the exception of at most one. Hence, $T$ cannot be a $k$-independent set and thus $\alpha_k(G) \le k+2$. This implies that $f(k,d) \le \frac{\alpha_k(G)}{n} \le \frac{k+2}{d+r+1}$. 
As $d \sim 2r^{k+2}$, we have $r \sim (\frac{d}{2})^{\frac{1}{k+2}}$, and thus there is a constant $c > 0$ such that $r = c (\frac{d}{2})^{\frac{1}{k+2}}$, implying that $f(k,d) \le  \frac{k+2}{d+c(\frac{d}{2})^{\frac{1}{k+2}}+1}$.
\end{proof}


\section{Open problems}

We close this paper with the following open problems.

\begin{prob}
Is $f(k,d)$ in fact a minimum for every $k$ and $d$? Namely, does
$$\inf \left\{\frac{\alpha_k(G)}{n(G)} : G \mbox{ graph with } d(G) \le d \right\} = \min \left\{\frac{\alpha_k(G)}{n(G)} : G \mbox{ graph with } d(G) \le d\right\}$$
hold?
\end{prob}

In case the answer to this problem is positive, this may have several consequences in computing $f(k,d)$.

\begin{prob}
Is the bound $f(k,d) \ge \frac{2k+2-d}{2k+2}$ of Theorem \ref{main} (2) sharp for $k \ge d \ge 2$?
\end{prob}

Below are the best possible bounds on $f(2,d)$ we have for $k = 0,1, \ldots, 10$.

\begin{center}
    \begin{tabular}{|c|c|c|c|c|}
        \hline
                &              lower          &        upper          &                                            & theorem used\\
        $d$ & bound$^*$  &   bound &  graph for upper bound &  for\\ 
                &&                     &                                   & upper bound \\ \hline 
        $0$ & $1$ & $1$ & $K_1$& -\\ \hline
        $1$ & $5/6$ & $5/6$ & $K_{1,k+1} \cup kK_1$& \ref{main}(2)\\ \hline
        $2$ & $2/3$ & $9/13$ & $(K_5 -E(K_3)) \cup 2K_{1,3}$& \ref{upper_bounds} (6)\\ \hline
        $3$ & $1/2$ & $3/5$ & $K_{5} - E(K_3)$& \ref{upper_bounds} (5), $q=0$\\ \hline
        $4$ & $4/9$ & $1/2$ & $K_{6} - E(K_3)$& \ref{upper_bounds} (5), $q=0$\\ \hline
        $5$ & $7/18$ & $6/13$ & $(K_{7} - E(K_3)) \cup (K_{6} - E(K_3))$& \ref{upper_bounds} (5), $q=1$\\ \hline
        $6$ & $1/3$ & $2/5$ & $(K_{8} - E(K_3)) \cup (K_{7} - E(K_3))$&  \ref{upper_bounds} (5), $q=1$\\ \hline
        $7$ & $11/36$ & $6/17$ & $(K_{9} - E(K_3)) \cup (K_{8} - E(K_3))$&  \ref{upper_bounds} (5), $q=1$\\ \hline
        $8$ & $5/13$ & $6/19$ & $(K_{10} - E(K_3)) \cup (K_{9} - E(K_3))$&  \ref{upper_bounds} (5), $q=1$\\ \hline
        $9$ & $1/4$ & $2/7$ & $(K_{11} - E(K_3)) \cup (K_{10} - E(K_3))$&  \ref{upper_bounds} (5), $q=1$\\ \hline
        $10$ & $7/30$ & $6/23$ & $(K_{12} - E(K_3)) \cup (K_{11} - E(K_3))$&  \ref{upper_bounds} (5), $q=1$\\ \hline
    \end{tabular}
    \end{center}
    \mbox{}\\[-3ex]
    \mbox{}
    \hspace{8ex}\small{*Lower bounds are from Thm. \ref{main}(2) in case $d=1$ and Thm. \ref{main}(1) else.}

\mbox{}
\begin{prob}
Improve upon the values given in the table.
\end{prob}

In order to better understand $f(k,d)$, we can define 
$$f(k,d,\Delta) = \inf \left\{\frac{\alpha_k(G)}{n(G)} : G \mbox{ is a graph with } d(G) \le d \mbox{ and } \Delta(G) \le \Delta\right\},$$
where $\Delta \ge d$, and $k$ are all nonnegative integers. Observe that $f(k,d) = \inf \{f(k,d,\Delta) : \Delta \ge d\}$ and hence a knowledge on $f(k,d,\Delta)$ may help in obtaining better bounds on $f(k,d)$. For instance, let us take $f(2,2,3)$. Observe that, from Theorem \ref{main} (2), $f(2,2,3) \ge f(2,2) \ge \frac{2}{3}$. Further, consider the graph $G = R_8 \cup 4K_{1,3}$ on $24$ vertices, where $R_8$ is the graph depicted below (note that $R_8$ is the extremal graph for Reed's upper bound of $\frac{3}{8}n$ on the domination number for graphs on $n$ vertices with minimum degree at least $3$), and observe that $\alpha_2(G) = 17$, $n(G) = 24$ and $\Delta(G) = 3$. Then, it follows that $\frac{2}{3} \le f(2,2) \le f(2,2,3) \le \frac{17}{24}$. 

\begin{figure}[h]
\begin{center}
\psset{unit=0.7cm, linewidth=0.03cm}
   \begin{pspicture}(-5,-1)(8,3.5)
   \cnode*(1,0){0.15}{1}
   \cnode*(2,0){0.15}{2}
   \cnode*(3,1){0.15}{3}
   \cnode*(3,2){0.15}{4}
   \cnode*(2,3){0.15}{5}
   \cnode*(1,3){0.15}{6}
   \cnode*(0,2){0.15}{7}
   \cnode*(0,1){0.15}{8} 
   \ncline{1}{2}
   \ncline{2}{3}
   \ncline{3}{4}
   \ncline{4}{5}
   \ncline{5}{6}
   \ncline{6}{7}
   \ncline{7}{8}
   \ncline{8}{1}
   \ncline{1}{5}
   \ncline{2}{6}
   \ncline{3}{7}
   \ncline{4}{8}
   \put(0,-1){\small{The graph $R_8$.}}
   \end{pspicture}
   \end{center}
   \end{figure}

But if we consider for instance the graph $H= (K_5 -E(K_3)) \cup 2K_{1,3}$, then we have there $\alpha_2(H) = 9$, $n(H) = 13$ and $\Delta(H) = 4$ and thus $\frac{2}{3} \le f(2,2) \le f(2,2,4) \le \frac{9}{13}$, which is better than the bound $\frac{17}{24}$ obtained with the graph $G$. Thus, we would like to state the following question.

\begin{prob}
Obtain lower and upper bounds on $f(k,d, \Delta)$.
\end{prob}

\end{document}